\newcommand{\defeq}{\vcentcolon=}
\newcommand{\eqdef}{=\vcentcolon}
\newcommand{\Law}{\mathcal{L}}
\newcommand{\weaklyto}{%
  \mathrel{\vbox{\offinterlineskip\ialign{%
    \hfil##\hfil\cr
    $\scriptscriptstyle\mathrm{w}$\cr
    $\to$\cr
}}}}
\newcommand{\distto}{%
  \mathrel{\vbox{\offinterlineskip\ialign{%
    \hfil##\hfil\cr
    $\scriptscriptstyle\mathrm{d}$\cr
    \noalign{\kern-.05ex}
    $\to$\cr
}}}}
\newcommand{\Probto}{%
  \mathrel{\vbox{\offinterlineskip\ialign{%
    \hfil##\hfil\cr
    $\scriptscriptstyle\Prob$\cr
    \noalign{\kern-.05ex}
    $\to$\cr
}}}}
\newcommand{\N}{\mathbb{N}}
\newcommand{\Z}{\mathbb{Z}}
\newcommand{\R}{\mathbb{R}}
\newcommand{\B}{\mathcal{B}}
\newcommand{\C}{\mathbb{C}}
\newcommand{\I}{\mathcal{I}}
\newcommand{\imag}{\mathrm{i}}
\newcommand{\Prob}{\mathbb{P}}
\newcommand{\E}{\mathbb{E}}
\newcommand{\F}{\mathcal{F}}
\newcommand{\cZ}{\mathcal{Z}}
\newcommand{\1}{\mathds{1}}
\newcommand{\meas}{\mathcal{M}}
\newcommand{\eqdist}{%
  \mathrel{\vbox{\offinterlineskip\ialign{%
    \hfil##\hfil\cr
    $\scriptscriptstyle\mathrm{law}$\cr
    \noalign{\kern.2ex}
    $=$\cr
}}}}
\newcommand{\ds}{\mathrm{d} \mathit{s}}
\newcommand{\dt}{\mathrm{d} \mathit{t}}
\newcommand{\du}{\mathrm{d} \mathit{u}}
\newcommand{\dx}{\mathrm{d} \mathit{x}}
\newcommand{\dy}{\mathrm{d} \mathit{y}}
\theoremstyle{plain}
\newtheorem{theorem}{Theorem}[section]
\newtheorem{lemma}[theorem]{Lemma}
\newtheorem{proposition}[theorem]{Proposition}
\theoremstyle{remark}
\theoremstyle{definition}
\newtheorem{definition}[theorem]{Definition}
\numberwithin{equation}{section}
\begin{document}

\title[Self-similar solutions to kinetic-type evolution equations]{Self-similar solutions to kinetic-type evolution equations: beyond the boundary case}

\authors{Dariusz~Buraczewski, Konrad~Kolesko and Matthias~Meiners}
\date{\today}

\address{Dariusz Buraczewski, Mathematical Institute, University of
Wroc{\l}aw, Plac Grunwaldzki 2/4, 50-384 Wroc{\l}aw, Poland}
	\email{dariusz.buraczewskid@uwr.edu.pl}

\address{Konrad~Kolesko, Institut f\"ur Mathematik, Universit\"at Innsbruck, Technikerstr.~13, 6020 Innsbruck, Austria,
and Mathematical Institute, University of
Wroc{\l}aw, Plac Grunwaldzki 2/4, 50-384 Wroc{\l}aw, Poland}
	\email{kolesko@math.uni.wroc.pl}

\address{Matthias~Meiners, Institut f\"ur Mathematik, Universit\"at Innsbruck, Technikerstr.~13, 6020 Innsbruck, Austria}
	\email{matthias.meiners@uibk.ac.at}

\keywords{Branching random walk; Kac model; kinetic equation; random trees; smoothing transform}
\subjclass[2010]{60F05, 60J80, 35B40, 82C40}

\begin{abstract}
We study the asymptotic behavior as $t \to \infty$ of a time-dependent family $(\mu_t)_{t \geq 0}$
of probability measures on $\R$ solving the kinetic-type evolution equation $\partial_t \mu_t + \mu_t = Q(\mu_t)$
where $Q$ is a smoothing transformation on~$\R$.
This problem has been investigated earlier,
e.g.\ by Bassetti and Ladelli [\emph{Ann.~Appl.~Probab.} 22(5): 1928--1961, 2012]
and Bogus, Buraczewski and Marynych [\emph{Stochastic~Process.~Appl.} 130(2):677--693, 2020].
Combining the refined analysis of the latter paper,
which provides a probabilistic description of the solution $\mu_t$
as the law of a suitable random sum related to a continuous-time branching random walk at time $t$,
with recent advances in the analysis of the extremal positions in the branching random walk
we are able to solve the remaining case that has been left open until now. In the course of our work,
we significantly weaken the assumptions in the literature that guarantee the existence (and uniqueness)
of a solution to the evolution equation $\partial_t \mu_t + \mu_t = Q(\mu_t)$.
\end{abstract}

\maketitle

\section{Introduction}		\label{sec:Intro}

Given a sequence $A = (A_1,A_2,\ldots)$ of non-negative random variables
with $N \defeq \max\{j: A_j \not = 0\} < \infty$ almost surely
we consider the kinetic-type evolution equation
\begin{equation}	\label{eq:kinetic-type equation}
\partial_t \mu_t + \mu_t = Q(\mu_t)
\end{equation}
for a time-dependent family $(\mu_t)_{t \geq 0}$ of probability measures on $\R$
equipped with the Borel $\sigma$-algebra $\B(\R)$
where \eqref{eq:kinetic-type equation} has to be understood in the weak sense
and $Q$ is the smoothing transformation associated with $A$.
More precisely,
the smoothing transformation $Q$ is a self-map of $\meas^1(\R)$,
the set of probability measures on $(\R,\B(\R))$,
and is defined by the formula
\begin{equation*}	\label{eq:Q}
Q(\mu) =  \Law\bigg(\sum_{j=1}^N A_j X_j\bigg),
\end{equation*}
where $\Law(Y)$ denotes the law of a random variable $Y$
and  $X_1,X_2,\ldots$ are i.i.d.\ and independent of $A$ with $X_j \sim \mu$, $j \in \N$.
On the level of the Fourier transform,
\eqref{eq:kinetic-type equation} corresponds to the Cauchy problem
\begin{equation}	\label{eq:kinetic-type equation FT}
\partial_t \phi_t(\xi)+\phi_t(\xi)=\widehat{Q}(\phi_t)(\xi),	\quad t\ge 0,\ \xi\in\R
\end{equation}
where
the boundary condition $\phi_0$ is the Fourier transform of a given $\mu_0 \in \meas^1(\R)$
and $\widehat{Q}$ is a self-map of the set of characteristic functions of probability measures on $(\R,\B(\R))$ defined by
\begin{equation}\label{eq:Q_hat}
\widehat{Q}(\phi)(\xi) \defeq \E\bigg[\prod_{j=1}^N \phi(A_j \xi)\bigg], \quad \xi \in \R,
\end{equation}
for $\phi$ being  the Fourier transform of some probability measure $\mu \in \meas^1(\R)$.

Under suitable assumptions (see e.g.\ Theorem \ref{Thm:existence and uniqueness} below
or \cite[Proposition 2.5]{Bogus+al:2019}),
given an initial law $\mu_0$, Eq.~\eqref{eq:kinetic-type equation} has a unique solution,
which we shall denote by $(\mu_t)_{t \geq 0}$ henceforth.
The corresponding family of Fourier transforms will be denoted by $(\phi_t)_{t \geq 0}$.
The behavior of the solution to \eqref{eq:kinetic-type equation} is strongly related with the \emph{spectral function}
$F(\theta)\defeq\Phi(\theta)/\theta, $ where
\begin{equation}	\label{eq:Phi(theta)}
\Phi(\theta) \defeq \E \bigg[\sum_{j=1}^N A_j^\theta \bigg] - 1,	\quad	\theta \geq 0.
\end{equation}

\subsection{Motivation and related models in the literature}
Let us now briefly present some models that fit into the framework of Equation \eqref{eq:kinetic-type equation}.
Most of the models have a fixed number of $A_j \not = 0$, i.e., $A=(A_1,\dots,A_N)$ with constant $N$.

The case $N=2$ and $A = (\sin U,\cos U)$,
$U$ being uniformly distributed on $[0,2\pi)$,
was considered by Kac \cite{Kac:1956} as a model for the behavior of a particle in a homogeneous gas,
where particles collide at random times.
It is known as the $1$-dimensional Kac caricature.
The distribution $\mu_t$ represents the law of the velocity of a randomly chosen particle
and the operator $Q$ describes the change of velocity after collision of two particles.\footnote{
Although $A_1,A_2$ are not nonnegative in the $1$-dimensional Kac caricature,
the model can be rephrased in the above setup
as $(\sin U, \cos U)$ has the same law as $(\epsilon_1 |\sin U|, \epsilon_2 |\cos U|)$
with independent $\epsilon_1,\epsilon_2$ that are uniform on $\{-1,1\}$.
Then one can replace $(\sin U, \cos U)$ by $(|\sin U|, |\cos U|)$
and replace the $X_j$ in the definition of $Q$ by $\epsilon_j X_j$, $j=1,2$
where $(\epsilon_1, \epsilon_2)$ and $(X_1,X_2)$ are independent,
which corresponds to restricting the smoothing transform to symmetric laws on $\R$.
}
In subsequent works, the model was extended in various directions,
for instance to non-conservative kinetic models, see e.g.~\cite{Pareschi+Toscani:2006}.

The kinetic evolution equation \eqref{eq:kinetic-type equation} also
found applications in models for wealth redistribution in econophysics.
Loosely speaking, gas particles become agents and the velocity of a particles becomes the agent's wealth.
More precisely, we consider a class of models with indistinguishable agents.
The agent state is characterized by his current wealth $w \geq 0$.
The interaction between two agents is described by
\begin{align*}
v^*&=p_1v+q_1w,
\quad
w^*=q_2v+p_2w,
\end{align*}
where $(v,w)$ and $(v^*,w^*)$ stand for the pre- and post-trade wealths of the two agents, respectively.
The coefficients $p_i$ and $q_i$ are assumed to be random representing the risk of the market.
The idea with random coefficients is due to the fact that agents may invest some of their money in risky assets.
It is common to  assume that the society's mean wealth is preserved on average, i.e.,
$\E[p_1+q_1+q_2+p_2]=2$.
In our framework it can be represented by choosing
\begin{equation}
\label{eq:econophysics model}
A=(A_1,A_2) \defeq (\epsilon p_1+(1-\epsilon)q_2,\epsilon q_1+(1-\epsilon)p_2),
\end{equation}
where $\epsilon$ is an independent Bernoulli variable with success parameter $\frac12$.
The conservation of mean translates to $\Phi(1)=0$.
It has been shown \cite{Duering+al:2008,Matthes+Toscani:2008} that if $\Phi(r)<0$ for some $r>1$ and the expectation of $\mu_0$ is finite,
then $\mu_t$ converges to some steady state $\mu_\infty$ which has either a Pareto tail or a slim tail.
On the other hand, if $\Phi(r) > 0$ for all $r>1$,
then $\mu_t \weaklyto \delta_0$, in other words, a typical agent goes bankrupt.
Therefore, it is natural to investigate the rate of decay of the wealth of a typical agent as $t \to \infty$.

We refer the reader to
\cite{Bassetti+Ladelli:2012,Bassetti+al:2011,Bassetti+al:2015,Bassetti+Perversi:2013, Bobylev+al:2009}
for examples and a more comprehensive account to the literature.

\subsection{State of the art and assumptions}	\label{subsec:assumptions}

The following assumptions concerning $A$ will be relevant in the paper:
\begin{enumerate}[\bf{(A}1)]
	\item
		$\Prob(A_1, A_2 \ldots \in ar^\Z \cup \{0\}) < 1$ for all $r >1$ and $1 \leq a < r$;		
\label{enum:non-lattice}
	\item
		There is $\vartheta>0$ such that $\Phi(\vartheta)  < \infty$,
		\begin{align}
   		\vartheta\E \bigg[\sum_{j \geq 1}A_j^\vartheta  \log A_j\bigg]+1
		&= \E \bigg[\sum_{j \geq 1} A_j^\vartheta \bigg]	\label{eq:boundary case assumption}	\\
\text{and}	\qquad
   		\E \bigg[\sum_{j \geq 1} A_j^\vartheta  \log^2 A_j\bigg] &< \infty.		\label{eq:2nd moment assoc RW}
		\end{align}	\label{enum:existence vartetha}
	\item
		For $X\defeq \sum_{j \geq 1} A_j^\vartheta$ and $\tilde X \defeq \sum_{j \geq 1} A_j^\vartheta\log_+A_j$
		it holds that
		\begin{equation*}
		\E[X\log_+^2X]<\infty\text{ and }\E[\tilde X\log_+ \tilde X]<\infty,
		\end{equation*}
		where $x_\pm\defeq \max(\pm x,0)$ for $x \in \R$.	\label{enum:XlogX condition}
    \item  For any $0<\delta<1$
    		\begin{equation*}
		\int_{1-\delta}^1\frac{\ds}{|\E[s^N]-s|}=\infty.
		\end{equation*}
        \label{enum:finitness of BRW}
\end{enumerate}
Assumption (A\ref{enum:non-lattice}) is a non-lattice assumption, while (A\ref{enum:finitness of BRW})
guarantees non-explosion of a related Markov branching process, see the discussion below \eqref{eq:m(t,theta)}.
Notice that $\E[N]<\infty$ is sufficient for (A\ref{enum:finitness of BRW}).
%

Notice that if $\Phi(\theta) < \infty$, then $F(\theta)$ equals the tangent of the angle between
the line segment joining $(0,0)$ and $(\theta,\Phi(\theta))$ and the positive horizontal half-axis.
If $\Phi$ is defined on some open neighborhood of $\vartheta$,
then the relation \eqref{eq:boundary case assumption} states that
$\Phi'(\vartheta) = F(\vartheta)$, i.e., $\vartheta$ is the unique minimizer of $F$.

The asymptotic behavior of $\mu_t$ as $t\to\infty$
depends on the interplay between the minimizer $\vartheta$ and the initial condition $\mu_0$.
More precisely, it depends on the relation between $\vartheta$ and $\gamma\in(0,2]$,
where $\gamma$ is such that $\phi_0(\xi) \sim 1-c_\pm|\xi|^\gamma$ as $\xi\to0^\pm$,
i.e., $\mu_0$ is in the domain of normal attraction of a $\gamma$-stable
law (and if $\mu_0$ is additionally centered when $\gamma > 1$).

The vast majority papers are treating the case where $\gamma<\vartheta$. In this case
\begin{equation}
\label{eq:subcritical asymptotic}
\phi_t(e^{-F(\gamma)t}\xi)\to \phi_{\infty}(\xi)
\end{equation}
where $\phi_\infty$ is the characteristic function of a non-degenerate probability distribution on $\R$
(cf.~\cite{Bobylev+al:2009} for an analytical approach and \cite{Bassetti+Ladelli:2012} for a probabilistic interpretation).

In the recent work \cite{Bogus+al:2019}
the authors establish a connection with continuous-time branching processes
which enables them to treat \emph{the boundary case} $\gamma=\vartheta$ in which
\begin{equation}
\label{eq:critical asymptotic}
\phi_t(t^{\frac1{2\vartheta }}e^{-F(\vartheta)t}\xi)\to \phi_{\infty}(\xi)
\end{equation}
again for the characteristic function $\phi_\infty$ of a non-degenerate probability measure on $\R$.

The purpose of this paper is to  fill the gap in the theory of one-dimensional kinetic-type equations
by treating the remaining case $\vartheta<\gamma$.
We demonstrate how the asymptotic behavior of $\mu_t$
can be derived from recent progress on kinetic-type equations \cite{Bogus+al:2019}
and on the extrema of branching random walks
\cite{Iksanov+Kolesko+Meiners:2018,Madaule:2017}.
Our proof works under a mild $X \log X$-type moment condition
(cf.~assumption (A\ref{enum:XlogX condition}))
and for random $N$.
We mention that the assumptions in the earlier results concerning the cases $\gamma<\vartheta$ or $\gamma=\vartheta$ may be weakened analogously.

\begin{figure}[h]
\begin{subfigure}[t]{0.3\textwidth}
\begin{center}
\begin{tikzpicture}[scale=0.6]
		\draw [help lines] (0,-1) grid (5,2);
		\draw [thick,->] (0,-1.05) -- (0,2.05);
		\draw [thick,->] (-0.05,0) -- (5.05,0);
		\draw (5,0) node[right]{$\theta$};
		\draw (0,2) node[above]{$\Phi(\theta)$};
		\filldraw[color=black] (1.3,-0.14) circle(0.4ex);
		\draw [densely dotted,-] (1.3,-0.14) -- (1.3,0) node[above] {\small $\gamma$};
		\filldraw[color=black] (2.061553,-0.4519411) circle(0.4ex);
		\draw [densely dotted,-] (2.061553,-0.4519411) -- (2.061553,0) node[above] {\small $\vartheta$};	
		\draw [-] (0,0) -- (3.9,-0.42);
		\draw [thick, domain=0:5, samples=128, color=black] plot(\x, {(\x/2-1.25)^2-0.5});
\end{tikzpicture}
\end{center}
\subcaption{\small The case covered by Bassetti and Ladelli \cite{Bassetti+Ladelli:2012}.}
\end{subfigure}
\hfill
\begin{subfigure}[t]{0.3\textwidth}
\begin{center}
\begin{tikzpicture}[scale=0.6]
		\draw [help lines] (0,-1) grid (5,2);
		\draw [thick,->] (0,-1.05) -- (0,2.05);
		\draw [thick,->] (-0.05,0) -- (5.05,0);
		\draw (5,0) node[right]{$\theta$};
		\draw (0,2) node[above]{$\Phi(\theta)$};
		\filldraw[color=black] (2.061553,-0.4519411) circle(0.4ex);
		\draw [densely dotted,-] (2.061553,-0.4519411) -- (2.061553,0) node[above] {\small $\vartheta$};		
		\draw [-] (0,0) -- (4.123106,-0.9038822);
		\draw [thick, domain=0:5, samples=128, color=black] plot(\x, {(\x/2-1.25)^2-0.5});
\end{tikzpicture}
\end{center}
\subcaption{\small The boundary case covered by Bogus et al.~\cite{Bogus+al:2019}.}
\end{subfigure}
\hfill
\begin{subfigure}[t]{0.3\textwidth}
\begin{center}
\begin{tikzpicture}[scale=0.6]
		\draw [help lines] (0,-1) grid (5,2);
		\draw [thick,->] (0,-1.05) -- (0,2.05);
		\draw [thick,->] (-0.05,0) -- (5.05,0);
		\draw (5,0) node[right]{$\theta$};
		\draw (0,2) node[above]{$\Phi(\theta)$};
		\filldraw[color=black] (2.061553,-0.4519411) circle(0.4ex);
		\draw [densely dotted,-] (2.061553,-0.4519411) -- (2.061553,0) node[above] {\small $\vartheta$};
		\draw (0,0) -- (4.8,-0.3428571);
		\filldraw[color=black] (3.5,-0.25) circle(0.4ex);
		\draw [densely dotted,-] (3.5,-0.25) -- (3.5,0) node[above] {\small $\gamma$};		
		\draw [thick, domain=0:5, samples=128, color=black] plot(\x, {(\x/2-1.25)^2-0.5});
\end{tikzpicture}
\end{center}
\subcaption{\small The case covered by Theorem \ref{Thm:beyond the boundary}.}
\end{subfigure}
\caption{\small The three regimes that can occur.}
\end{figure}
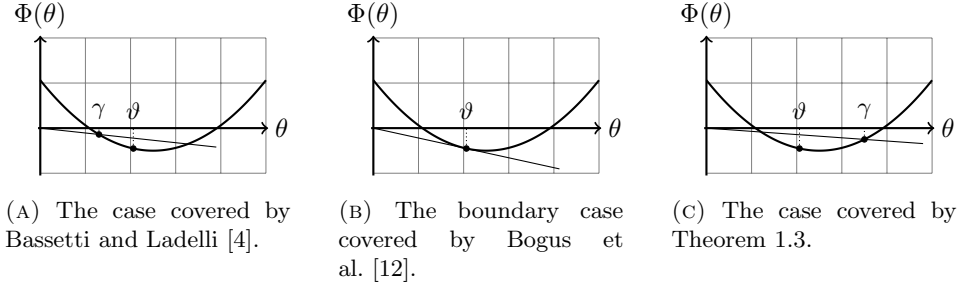

\begin{definition}
For $\gamma \in(0,2]$ by $\meas^1_\gamma(\R)$ we denote the class of probability measures
with finite absolute moment of order $\gamma$, centered if $\gamma>1$.
\end{definition}
The key property of the class $\meas^1_\gamma(\R)$ is
that for a sequence $(X_j)_{j \in \N}$ of independent random variables with
$\Law (X_j) \in \meas^1_\gamma(\R)$, we have
\begin{align}	\label{eq:moment subadditivity}
\E\bigg[\Big|\sum_{j =1}^\infty X_j\Big|^\gamma\bigg] \leq 2 \sum_{j=1}^\infty \E\big[ |X_j|^\gamma\big], \qquad n\in\N
\end{align}
if the right-hand side is finite.
In fact, for $\gamma \leq 1$, \eqref{eq:moment subadditivity} holds,
even with $2$ replaced by $1$, by the subadditivity of $x \mapsto |x|^\gamma$.
On the other hand, for $\gamma \in (1,2]$, \eqref{eq:moment subadditivity} is
a consequence of the von Bahr--Esseen inequality.
We conclude that if $\Phi(\gamma)<\infty$,
then the restriction $Q\!\restriction_{\meas^1_\gamma(\R)}$ is a well-defined mapping
from $\meas^1_\gamma(\R)$ to itself.

We now state the two main results of the paper.

\begin{theorem}	\label{Thm:existence and uniqueness}
Each of the following assumptions is sufficient for the existence of a solution
$(\mu_t)_{t \geq 0}$ to the evolution equation \eqref{eq:kinetic-type equation}:
\begin{enumerate}[(i)]
	\item
		Assumption (A\ref{enum:finitness of BRW}) holds.
	\item
		There exists a $\gamma\in(0,2]$ with $\mu_0\in\meas^1_\gamma(\R)$ and $\Phi(\gamma)<\infty$.
\end{enumerate}
If (i) holds, the solution is unique, if (ii) holds it is the unique solution
satisfying $\sup_{0 \leq s \leq t} \int |x|^\gamma \, \mu_s(\dx)<\infty$ for all $t \geq 0$.
\end{theorem}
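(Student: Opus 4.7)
The plan is to rewrite \eqref{eq:kinetic-type equation FT} in the integral (Duhamel) form
\[
\phi_t(\xi) = e^{-t}\phi_0(\xi) + \int_0^t e^{-(t-s)}\,\widehat{Q}(\phi_s)(\xi)\, ds,
\]
and to exploit its probabilistic representation through a continuous-time branching random walk (BRW). In the BRW each particle carries a positive multiplicative label, waits an independent $\mathrm{Exp}(1)$ time, and is then replaced by $N$ offspring whose labels are obtained by multiplying the parent label by $A_1,\ldots,A_N$. Writing $\mathcal{N}_t$ for the particles alive at time $t$ and $L_u(t)$ for the product of multipliers along the ancestral line of $u$, the candidate solution is
\[
\mu_t \defeq \Law\Bigl(\sum_{u \in \mathcal{N}_t} L_u(t)\, X_u\Bigr),
\]
with $(X_u)$ i.i.d.\ of law $\mu_0$, independent of the BRW. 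Conditioning on the first branching time $\tau\sim\mathrm{Exp}(1)$ of the root and applying the Markov branching property to the $N$ independent sub-BRWs shows on the Fourier side that any such $\mu_t$ satisfies the Duhamel identity, hence is a weak solution of \eqref{eq:kinetic-type equation}.

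For existence under (i), assumption (A\ref{enum:finitness of BRW}) is the classical non-explosion criterion for the pure-birth Markov chain $(|\mathcal{N}_t|)_{t\ge 0}$, so $|\mathcal{N}_t|<\infty$ almost surely and the random sum defining $\mu_t$ reduces to a finite sum for every $t\geq 0$; no moment assumption on $\mu_0$ is needed.

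For existence under (ii), the BRW may explode, but $\Phi(\gamma)<\infty$ still controls the $\gamma$-th moment. A standard many-to-one calculation (differentiating $m(t,\gamma)\defeq\E\sum_{u\in\mathcal{N}_t}L_u(t)^\gamma$ in $t$) yields $m(t,\gamma)=e^{\Phi(\gamma)t}<\infty$. Conditioning on the BRW $\sigma$-field and applying \eqref{eq:moment subadditivity} to partial sums along any enumeration of $\mathcal{N}_t$ shows that these partial sums form an $L^\gamma$-Cauchy sequence; when $\gamma>1$ centering of $\mu_0$ and L\'evy's theorem upgrade this to almost-sure convergence, while when $\gamma\leq 1$ subadditivity of $x\mapsto x^\gamma$ and finiteness of $\sum_{u\in\mathcal{N}_t}|L_u(t)X_u|^\gamma$ give the same conclusion directly. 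The limit lies in $\meas^1_\gamma(\R)$ and, by the conditioning argument, satisfies the Duhamel identity.

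Uniqueness under (ii) follows from a Gronwall argument in the Fourier--Zolotarev distance $d_\gamma(\mu,\nu)\defeq\sup_{\xi\neq 0}|\phi_\mu(\xi)-\phi_\nu(\xi)|/|\xi|^\gamma$, which is finite on $\meas^1_\gamma(\R)$: the telescoping bound $|\widehat{Q}(\phi)(\xi)-\widehat{Q}(\psi)(\xi)|\leq\E\sum_{j=1}^N|\phi(A_j\xi)-\psi(A_j\xi)|$ combined with $|\phi_s(\eta)-\psi_s(\eta)|\leq d_\gamma(\mu_s,\nu_s)\,|\eta|^\gamma$ and $\E\sum_j A_j^\gamma=\Phi(\gamma)+1$ gives $d_\gamma(\mu_t,\nu_t)\leq(\Phi(\gamma)+1)\int_0^t e^{-(t-s)}d_\gamma(\mu_s,\nu_s)\,ds$, so Gronwall forces $d_\gamma\equiv 0$. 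Uniqueness under (i) is obtained by iterating the Duhamel identity $k$ times: the result is an explicit tree expansion of depth $\leq k$ involving only $\phi_0$, plus an error bounded by twice the probability that some ancestral line in the BRW has undergone more than $k$ branchings by time $t$; non-explosion under (A\ref{enum:finitness of BRW}) forces this error to $0$ and pins every solution down to $\phi_t(\xi)=\E\prod_{u\in\mathcal{N}_t}\phi_0(L_u(t)\xi)$. I expect the main technical nuisance to be making the $L^\gamma$-convergence of the random sum in case (ii) fully rigorous when $\mathcal{N}_t$ is infinite: one must show that the Cauchy estimates in $L^\gamma$ (or their subadditive analog for $\gamma\leq 1$) transfer to almost-sure convergence of the series and that the limit is independent of the chosen enumeration of particles.
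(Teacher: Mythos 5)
Your construction of the solution via the random sum over the continuous-time BRW, the verification of the Duhamel identity by conditioning on the first split, the non-explosion argument for (i), and the $L^\gamma$-convergence argument for (ii) are all essentially the paper's own proof of Proposition~\ref{Prop:Wild representation}. Your uniqueness argument under (i) (iterate Duhamel, bound the error by $2\Prob(\I_t^n\setminus\I_t^{n,1}\neq\varnothing)$ and use non-explosion) is also the paper's approach. What differs is uniqueness under (ii): you run a Gronwall argument in the Fourier–Zolotarev distance $d_\gamma(\mu,\nu)=\sup_{\xi\neq0}|\phi_\mu(\xi)-\phi_\nu(\xi)|/|\xi|^\gamma$, whereas the paper sticks with the iterated tree expansion and shows that the remainder factor $\prod_{u\in\I_t^n\setminus\I_t^{n,1}}\phi_{t-\sigma(u)}(e^{-S(u)}\xi)$ tends to $1$ in $L^1$, using the bound $\sup_{0\leq s\leq t}|1-\phi_s(\xi)|\leq C(t)|\xi|^\gamma$ together with $\E\big[\sum_{|u|=n,\,\sigma(u)+E(u)\leq t}e^{-\gamma S(u)}\big]\leq(\Phi(\gamma)+1)^n\sum_{k>n}e^{-t}t^k/k!\to0$. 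Your Gronwall route is shorter and avoids the combinatorics of the tree expansion; the paper's route has the virtue of giving uniqueness under (i) and (ii) by one and the same mechanism, with the hypothesis only affecting how the remainder term is killed. One small caveat applies equally to both arguments: for $\gamma\in(1,2]$, the estimate $|\phi_s(\xi)-\psi_s(\xi)|\leq C|\xi|^\gamma$ (and hence finiteness of $d_\gamma$, resp.\ the paper's $C(t)$) requires the competing solutions to be centered, not merely to have finite $\gamma$-th moments; this is implicit in the membership $\mu_s\in\meas^1_\gamma(\R)$ but should be stated in the uniqueness class.
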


\begin{theorem}	\label{Thm:beyond the boundary}
Suppose that (A\ref{enum:non-lattice}) through (A\ref{enum:XlogX condition}) hold with $0 < \vartheta < 2$.
Further, assume that the initial condition $\mu_0$ belongs to the class $\meas^1_\gamma(\R)$
for some $\gamma \in (\vartheta,2]$.
Then there is a solution $(\mu_t)_{t \geq 0}$ to \eqref{eq:kinetic-type equation}
and a probability measure $\mu_\infty$ on the Borel sets of $\R$,
not concentrated in a single point, such that
\begin{equation*}
\lim_{t \to \infty} \phi_t\big(t^{\frac3{2\vartheta}} e^{-F(\vartheta) t} \xi\big) \to \phi_\infty(\xi)
\quad	\text{ for all } \xi \in \R,
\end{equation*}
for the characteristic function $\phi_\infty$ of $\mu_\infty$.

Moreover, if $Z$ is a random variable with law $\mu_{\infty}$, then it satisfies
the following stochastic fixed-point equation
\begin{equation}	\label{eq:smoothing equation}
Z \eqdist U^{F(\vartheta)} \sum_{j=1}^{N}
A_j Z^{(j)},
\end{equation}
where $Z^{(1)}, Z^{(2)}, \ldots$ are independent copies of $Z$, $U$ is uniformly distributed on $(0,1)$
and $U$, $A = (A_1,A_2,\ldots)$
and $(Z^{(j)})_{j \in \N}$ are independent.
\end{theorem}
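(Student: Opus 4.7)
The plan is to combine the probabilistic representation of the solution from \cite{Bogus+al:2019} with the extremal-process asymptotics for the continuous-time branching random walk (BRW) developed in \cite{Madaule:2017,Iksanov+Kolesko+Meiners:2018}.

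\textbf{Setup.} Let $\mathcal{N}_t$ be the set of particles alive at time $t$ in the continuous-time BRW driven by $A$: each particle waits an $\mathrm{Exp}(1)$ time and is then replaced by $N$ offspring whose weights are obtained by multiplying the parent's weight by $A_1,\dots,A_N$; for $u \in \mathcal{N}_t$ write $L(u)$ for its weight. Following \cite{Bogus+al:2019}, the Fourier transform of the solution admits the representation
\[
\phi_t(\xi) = \E\bigg[\prod_{u \in \mathcal{N}_t}\phi_0(\xi L(u))\bigg],
\]
which also furnishes existence via Theorem~\ref{Thm:existence and uniqueness}. Substituting $\xi \mapsto t^{3/(2\vartheta)}e^{-F(\vartheta)t}\xi$ and setting $Y_u(t) \defeq \log L(u) - F(\vartheta)\,t + \tfrac{3}{2\vartheta}\log t$ (the Bramson-type shift) gives
\[
\phi_t\big(t^{3/(2\vartheta)}e^{-F(\vartheta)t}\xi\big) = \E\bigg[\prod_{u \in \mathcal{N}_t}\phi_0\big(\xi\,e^{Y_u(t)}\big)\bigg].
\]

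\textbf{Extremal convergence and identification of the limit.} Under (A\ref{enum:non-lattice})--(A\ref{enum:XlogX condition}), the results of \cite{Madaule:2017,Iksanov+Kolesko+Meiners:2018} show that the shifted point process $\sum_{u \in \mathcal{N}_t}\delta_{Y_u(t)}$ converges in distribution to a decorated Poisson point process $\mathcal{E}_\infty$ with intensity proportional to $Z_\infty\,e^{-\vartheta x}\dx$; here $Z_\infty$ denotes the a.s.\ limit of the derivative martingale of the BRW at the critical exponent $\vartheta$, and $Z_\infty>0$ on the survival event precisely thanks to (A\ref{enum:XlogX condition}). Since $\mu_0 \in \meas^1_\gamma(\R)$ with $\gamma > \vartheta$, a Taylor expansion gives $|1-\phi_0(z)| = O(|z|^\gamma)$ as $z\to 0$, so particles with strongly negative $Y_u(t)$ contribute negligibly. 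A truncation argument, resting on the continuity of $\mathcal{P} \mapsto \prod_{y\in\mathcal{P}}\phi_0(\xi e^y)$ on configurations satisfying a suitable summability condition, together with uniform moment bounds controlling the tail $\sum_{u\in\mathcal{N}_t}\1_{\{Y_u(t)\le -A\}}\,|\phi_0(\xi e^{Y_u(t)})-1|$ (obtained via the many-to-one lemma and spinal decomposition), then delivers the pointwise convergence $\phi_t(t^{3/(2\vartheta)}e^{-F(\vartheta)t}\xi) \to \phi_\infty(\xi)$ for a characteristic function $\phi_\infty$ of a non-degenerate probability measure $\mu_\infty$, expressible as a Laplace-type functional of $\mathcal{E}_\infty$ (with non-degeneracy inherited from $\Prob(Z_\infty>0)>0$).

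\textbf{Fixed-point equation and main obstacle.} The identity \eqref{eq:smoothing equation} is obtained by conditioning on the first branching time $T \sim \mathrm{Exp}(1)$: setting $U \defeq e^{-T}$, which is uniform on $(0,1)$, the BRW after time $T$ decomposes into $N$ independent copies of the BRW multiplied by $A_j$ and restarted, so passing to the limit $t\to\infty$ inside this decomposition and using $U^{F(\vartheta)}=e^{-F(\vartheta)T}$ yields the distributional recursion for $Z\sim\mu_\infty$. The most delicate step is the truncation in the previous paragraph: the functional $\{y_u\}\mapsto\prod_u\phi_0(\xi e^{y_u})$ is not continuous on point configurations in any purely local topology, so one must jointly control the leading extremal particles (through the Madaule-type extremal convergence) and the bulk contribution (through the $X\log X$ condition (A\ref{enum:XlogX condition}), which is precisely calibrated to yield uniform integrability of the sums involved) in order to patch the two regimes into a single limit statement.
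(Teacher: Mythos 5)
Your overall plan — use the BRW representation $\phi_t(\xi)=\E\prod_{u\in\I_t}\phi_0(\xi e^{-S(u)})$, pass to the shifted positions $Y_u(t)=\log L(u)-F(\vartheta)t+\tfrac{3}{2\vartheta}\log t$, invoke extremal-process convergence \`a la Madaule/Iksanov--Kolesko--Meiners, and split off a tail via $|1-\phi_0(z)|=O(|z|^\gamma)$ — is exactly the right conceptual framework, and your identification of the fixed-point equation by conditioning on the first split time is correct.

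However, there is a genuine gap in how you pass from the known extremal results to the statement $\phi_t(t^{3/(2\vartheta)}e^{-F(\vartheta)t}\xi)\to\phi_\infty(\xi)$ as $t\to\infty$ through \emph{all reals}. The extremal-process convergence you cite from \cite{Madaule:2017} and \cite{Iksanov+Kolesko+Meiners:2018} is a statement about a \emph{discrete-time} branching random walk; it applies to the skeleton process $(\cZ_{n\delta})_{n\in\N_0}$ for a fixed step $\delta>0$, not directly to the continuous-time family $(\cZ_t)_{t\ge 0}$. So at best your argument yields convergence of $(n\delta)^{3/(2\vartheta)}e^{-F(\vartheta)n\delta}U_{n\delta}$ along lattice sequences, with an a priori $\delta$-dependent limit $Z_\delta$. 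To upgrade this to continuous-time convergence you need a separate mechanism; the paper uses the Croft--Kingman lemma, which requires proving that $t\mapsto\E[f(t^{3/(2\vartheta)}e^{-F(\vartheta)t}U_t)]$ is continuous. This in turn hinges on the nontrivial $L^\vartheta$-continuity of $(U_t)$, which is established via a Topchi\u\i--Vatutin/Jensen argument resting on a dedicated $X\log X$-type moment estimate for $\sum_{u\in\I_t}e^{-S(u)}$ (Lemmas~\ref{lem:XlogX} and~\ref{Lem:L^r-continuous}). Your write-up omits this reduction entirely, and it is not a cosmetic detail: without it there is no path from the discrete-time extremal theorem to the continuous-time statement, and no reason a priori why the $Z_\delta$ should all coincide.

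A second, smaller gap: you describe (A\ref{enum:XlogX condition}) as ``precisely calibrated'' to give what is needed, but verifying that the skeleton BRW (in the $V(u)=\vartheta S(u)+n\delta\Phi(\vartheta)$ normalization) actually satisfies the boundary-case identities, the finite-variance condition, the non-lattice condition, and the two $X\log X$-type moment conditions required by \cite{Madaule:2017} is concrete work. In particular the first moment condition is not a straightforward translation of (A\ref{enum:XlogX condition}): the paper derives it by applying the generic estimate of Lemma~\ref{lem:XlogX} to the continuous-time tree up to the fixed time $\delta$, and the second one by an auxiliary change of the walk ($\tilde S$). Finally, the ``truncation'' step you flag as the main obstacle is indeed real, but the cleaner route, which the paper takes, is to work with the random sum $U_t$ and the \emph{marked} point process $\sum_u\delta_{(V_n(u),X_u)}$ together with a three-step Billingsley-type approximation, rather than trying to argue continuity of the infinite product functional $\{y_u\}\mapsto\prod_u\phi_0(\xi e^{y_u})$; the latter functional is not vaguely continuous and your sketch does not explain how to circumvent this.
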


Equation \eqref{eq:smoothing equation} is called fixed-point equation of the smoothing transformation.
A lot of information on $Z$ can be extracted from the fact that (the law of) $Z$
satisfies \eqref{eq:smoothing equation}.
More precisely, we are in the situation with nonnegative weights $U^{F(\vartheta)} A_j$, $j \in \N$
and possibly real-valued $Z$. In this setup, the equation has been solved in \cite{Alsmeyer+Meiners:2013}.
An important parameter for \eqref{eq:smoothing equation} is the characteristic index $\alpha > 0$,
the minimal positive solution of the equation $m(t) = 1$ where
\begin{equation*}
m(t) = \E\bigg[\sum_{j \geq 1} U^{t F(\vartheta)} A_j^t \bigg] = \frac{\Phi(t)+1}{\frac{t}{\vartheta} \Phi(\vartheta) + 1}.
\end{equation*}
Notice that $m(\vartheta) = 1$. If $\Phi(t) = \infty$ for all $t < \vartheta$, then $\alpha=\vartheta$.
If $\Phi(t) < \infty$ for some $0 \leq t < \vartheta$,
then $m$ is differentiable (from the left) at $t=\vartheta$.
The derivative equals $0$ at $\vartheta$ if \eqref{eq:boundary case assumption} holds.
By the convexity of $m$, we again infer that $\alpha=\vartheta$.
If $m(t) < \infty$ for some $t<\vartheta$, Theorems 2.1 and 2.2 of \cite{Alsmeyer+Meiners:2013} imply that
\begin{equation*}
Z \eqdist W^{1/\vartheta} Y_\vartheta
\end{equation*}
where $W \geq 0$ is the limit of an associated derivative martingale
(namely, that of the branching random walk with first generation positions given by $-\log(U^{\Phi(\vartheta)} A_j^\vartheta)$, $j \in \N$
with $A_j > 0$)
and $Y_\vartheta$ is
a strictly $\vartheta$-stable random variable independent of $W$.
From this representation,
one can deduce various properties of the distribution of $Z$.
For instance, one may deduce the tail-behavior of $Z$ from that of $W$ and $Y_\vartheta$
using the main result of the recent paper \cite{Buraczewski+al:2020}.

Let us demonstrate how Theorem \ref{Thm:beyond the boundary} translates to the particular economical model
described above  by the random vector $(A_1,A_2)$ defined in \eqref{eq:econophysics model}.
In this model, $\Phi(1)=0$ and if $\Phi(r)=0$ for some $r<1$,
then the typical agent goes bankrupt (provided the initial wealth distribution $\mu_0$ has finite mean).
In this case, the minimizer $\vartheta$ is in the interval $(r,1)$ and $F(\vartheta)<0$.
If the tail of the initial distribution is heavy enough (i.e., if $\gamma<\vartheta$),
then the wealth of a typical agent behaves like $e^{F(\gamma)t}$,
which diverges for $\gamma<r$.
Next, in the boundary case $\gamma=\vartheta$, the correct asymptotic is $t^{-\frac1{2\vartheta}} e^{F(\vartheta) t}$.
Our result deals with the remaining case where the tail of the initial distribution is not heavy enough, i.e.,
$\gamma > \vartheta$.
In particular, this covers the situation where the first moment exists,
which seems to be the most natural case in this context.
In this case, the wealth of a typical agent decays like $t^{-\frac3{2\vartheta}} e^{F(\vartheta) t}$ as $t \to \infty$.

\subsection{Representation of solutions: the branching random walk connection.}

Given an initial distribution $\mu_0$ and the random vector $A$,
we give a representation of $\mu_t$ as the law of a continuous-time branching random walk at time $t$.
The exact form of representation was developed in \cite{Bogus+al:2019},
see also \cite{Bassetti+Ladelli:2012} and the references therein for earlier results.

We write $\I \defeq \bigcup_{n \in \N_0} \N^n$
where $\N^0 = \{\varnothing\}$ contains only the empty tuple $\varnothing$.
For $u \in \I$, $u=(u_1,\ldots,u_m)$, we also write $u_1 \ldots u_m$
and if $v=(v_1,\ldots,v_n)$, we write $uv$ for $(u_1,\ldots,u_m,v_1,\ldots,v_n)$.
Further, if $k \leq m$, we set $u|_k \defeq u_1 \ldots u_k$.
Finally, for $u \in \I$, we use the notation $|u|=n$ for $u \in\N^n$.

Throughout the paper, we work on a fixed probability space $(\Omega,\F,\Prob)$
on which two independent families $(A(u),E(u))_{u \in \I}$ and $(X_u)_{u \in \I}$
of random vectors and random variables, respectively, are defined such that
\begin{itemize}
	\item
		the $(A(u),E(u))$, $u \in \I$ are independent and identically distributed (i.i.d.)
		copies of $(A,E)$ where $A$ is given and $E$ is an independent unit-mean exponential random variable;
	\item the $X_u$, $u \in \I$ are i.i.d.\ copies of a random variable $X$ with $\Law(X) = \mu_0$.
\end{itemize}
For convenience, we denote quantities related to the ancestor without the label $\varnothing$,
i.e., $(A,E) = (A(\varnothing),E(\varnothing))$ etc.

We now recursively define a continuous-time Markov branching process $(\mathcal{Y}_t)_{t\geq 0}$
starting with one particle,
the ancestor, denoted by $\varnothing$,
at time $t=0$. The birth-time of the ancestor is $\sigma(\varnothing) = 0$.
If a particle labelled $u \in \I$ is born at time $\sigma(u)$,
it lives an exponential lifetime $E(u)$ until $\sigma(u)+E(u)$
at which time it dies and simultaneously gives birth to new particles labelled $u1,u2,\ldots$.
For a particle $u = u_1 \ldots u_m \in \I$, we write
\begin{equation*}
S(u) \defeq - \sum_{k=1}^{m}  \log A_{u_k} (u|_{k-1})
\end{equation*}
for its position on the real line. The position $S(u) = \infty$, we consider as a ghost type:
the corresponding individual is never born.
We write
\begin{equation*}
\I_t \defeq \{u \in \I: S(u) < \infty\text{ and } \sigma(u) \leq t < \sigma(u)+E(u)\}
\end{equation*}
for the set of labels pertaining to individuals alive at time $t$.
Finally, we write
\begin{equation*}
\cZ_t \defeq \sum_{u \in \I_t} \delta_{S(u)}
\end{equation*}
for the continuous-time branching random walk at time $t \geq 0$.
Throughout the paper, we denote by $(T_n)_{n \in \N_0}$
the sequence of points in increasing order of a homogeneous Poisson process
with intensity $1$ and a point at the origin, i.e., $T_0=0$.
We suppose that $(T_n)_{n \in \N_0}$ is independent of the $(A(u),E(u))$, $X_u$, $u \in \I$.
The Laplace transform at $\theta \geq 0$ of the intensity measure of $\cZ_{t}$ is given by
\begin{align}	\label{eq:m(t,theta)}
m(t,\theta)
&\defeq \E \bigg[\sum_{u \in \I_t} e^{-\theta S(u)} \bigg] = \E \bigg[\sum_{n\ge0} \sum_{|u|=n} e^{-\theta S(u)}\1_{\{\sigma(u)\le t,\sigma(u)+E(u)> t\}} \bigg] \notag	\\
&= \sum_{n \geq 0} \E \bigg[ \sum_{|u|=n} e^{-\theta S(u)}\bigg] \Prob(T_n \leq t < T_{n+1})	\notag	\\
&= \sum_{n \geq 0} (\Phi(\theta)+1)^n e^{-t}\frac {t^n}{n!} = e^{t\Phi(\theta)}.
\end{align}
By classical results \cite[Theorem 2.1, p.~119]{Asmussen+Hering:1983}
it follows that the set $\I_t$ is finite almost surely for all $t \geq 0$,
provided (A\ref{enum:finitness of BRW}) holds. In particular, the sum
\begin{equation}	\label{eq:random sum}
U_t  \defeq \sum_{u \in \I_t} e^{-S(u)} X_u
\end{equation}
is a well-defined, finite random variable.
On the other hand, if $\Law(X)\in\meas^1_\theta(\R)$ and $\Phi(\theta)<\infty$ for some $0< \theta \leq 2$,
then the right hand side of \eqref{eq:random sum} converges in $L^\theta$ by \eqref{eq:moment subadditivity} and \eqref{eq:m(t,theta)}.

The connection between the continuous-time branching random walk $\cZ_t$ and the kinetic-type
evolution equation \eqref{eq:kinetic-type equation} is established in the following proposition,
which implies Theorem \ref{Thm:existence and uniqueness}.

\begin{proposition}	\label{Prop:Wild representation}
In the situation of Theorem \ref{Thm:existence and uniqueness},
each of the conditions (i) and (ii) of the theorem implies
the existence of a solution $(\mu_t)_{t \geq 0}$ to \eqref{eq:kinetic-type equation}
given by
\begin{equation}	\label{eq:Wild representation}
\mu_t = \Law\big(U_t \big),	\quad	t \geq 0.
\end{equation}
If (i) holds, the solution is unique.
If (ii) holds, then it is the unique solution
satisfying $\sup_{0 \leq s \leq t} \int |x|^\gamma \, \mu_s(\dx)<\infty$ for all $t \geq 0$.
\end{proposition}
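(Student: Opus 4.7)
The plan is to show that $\mu_t = \Law(U_t)$ solves the Duhamel (integral) form of \eqref{eq:kinetic-type equation FT} in both regimes, and then to argue uniqueness separately in each case. Well-definedness of $U_t$ is immediate under (i) from the a.s.\ finiteness of $\I_t$ (a classical consequence of (A\ref{enum:finitness of BRW})). Under (ii), $\I_t$ may be infinite, but combining \eqref{eq:m(t,theta)} with the von Bahr--Esseen-type inequality \eqref{eq:moment subadditivity} yields
\[
\E\big[|U_t|^\gamma\big] \leq 2\, \E\bigg[\sum_{u \in \I_t} e^{-\gamma S(u)}\bigg] \E\big[|X|^\gamma\big] = 2 e^{t\Phi(\gamma)} \int |x|^\gamma \,\mu_0(\dx) < \infty,
\]
so $U_t$ exists as an $L^\gamma$-limit and the map $t \mapsto \Law(U_t)$ takes values in $\meas^1_\gamma(\R)$.

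The key probabilistic step for existence is a recursion for $U_t$ obtained by conditioning on the lifetime $E$ of the ancestor: on $\{E > t\}$ one has $U_t = X$, while on $\{E \leq t\}$ the memoryless property together with the branching property at the first split give $U_t \eqdist \sum_{j=1}^{N} A_j\, U_{t-E}^{(j)}$ with $U_{t-E}^{(j)}$ i.i.d.\ copies of $U_{t-E}$ independent of $(A,E)$. Taking Fourier transforms and integrating over the law of $E$ produces the Duhamel form
\[
\phi_t(\xi) = e^{-t}\phi_0(\xi) + \int_0^t e^{-(t-s)}\widehat{Q}(\phi_s)(\xi)\,\ds,
\]
and since $s \mapsto \widehat{Q}(\phi_s)(\xi)$ is continuous, differentiation in $t$ yields \eqref{eq:kinetic-type equation FT}, equivalent to the weak form of \eqref{eq:kinetic-type equation}.

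For uniqueness under (i), I would iterate the above Duhamel equation starting from any candidate solution $\tilde{\phi}_t$. Each iteration unfolds one further level of the genealogy tree; after $n$ iterations, the ``fully resolved'' part of the expansion matches $\E\big[\1_{A_n}\prod_{u \in \I_t}\phi_0(e^{-S(u)}\xi)\big]$, where $A_n$ is the event that no individual of generation $n$ has been born by time $t$, while the unresolved remainder is bounded in modulus by $\Prob(A_n^{\comp})$ because characteristic functions have modulus $\leq 1$. Under (A\ref{enum:finitness of BRW}), $|\I_t|<\infty$ a.s., so $\Prob(A_n^{\comp}) \to 0$ and $\tilde\phi_t$ must coincide with the characteristic function of $U_t$.

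For uniqueness under (ii), I would instead use the Fourier--Zolotarev metric
\[
d_\gamma(\mu,\nu) \defeq \sup_{\xi \neq 0} |\xi|^{-\gamma} |\hat\mu(\xi) - \hat\nu(\xi)|,
\]
which is finite on differences of laws in $\meas^1_\gamma(\R)$. The elementary estimate $|\prod_j a_j - \prod_j b_j| \leq \sum_j |a_j - b_j|$ for $|a_j|,|b_j|\le 1$ combined with \eqref{eq:Q_hat} and \eqref{eq:Phi(theta)} gives the contraction $d_\gamma(\widehat{Q}(\phi),\widehat{Q}(\psi)) \leq (\Phi(\gamma)+1)\,d_\gamma(\phi,\psi)$. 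Inserted into the Duhamel form, Gronwall's lemma forces $d_\gamma(\phi_t,\tilde\phi_t) \equiv 0$. The main technical point is the bookkeeping behind the tree-indexed iteration in the argument for (i): identifying the $n$-fold nested Duhamel integrals with expectations over the truncated branching tree demands careful multi-index manipulations, even though each iteration conceptually mirrors one transition of the underlying continuous-time Markov branching process.
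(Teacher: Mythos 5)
Your proposal is correct and, for existence and for uniqueness under (i), takes essentially the same path as the paper: derive the Duhamel form, interpret it via the branching tree, verify that $\psi_t(\xi) = \E[\exp(\imag\xi U_t)]$ satisfies it, and for (i) iterate and use non-explosion of the continuous-time Markov branching process (which, as you note, makes the number of births by time $t$ a.s.\ finite so that the remainder vanishes). For uniqueness under (ii), however, you take a genuinely different route. The paper also iterates the Duhamel identity, writes $\phi_t(\xi) = \E\big[\prod_{u\in\I_t^n}\phi_{t-\sigma(u)}(e^{-S(u)}\xi)\big]$, and controls the unresolved factor $\prod_{u\in\I_t^n\setminus\I_t^{n,1}}\phi_{t-\sigma(u)}(e^{-S(u)}\xi)$ in $L^1$ using the bound $\sup_{0\le s\le t}|1-\phi_s(\xi)| \le C(t)|\xi|^\gamma$ together with the Poisson-tail estimate $\E\big[\sum_{|u|=n,\,\sigma(u)+E(u)\le t}e^{-\gamma S(u)}\big] \le (\Phi(\gamma)+1)^n \sum_{k>n}e^{-t}t^k/k!$. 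You instead use the Zolotarev-type metric $d_\gamma$, show that $\widehat Q$ is $(\Phi(\gamma)+1)$-Lipschitz in $d_\gamma$, and run Gronwall on the Duhamel form. Both proofs hinge on exactly the same moment estimate $|1-\phi_s(\xi)| \le C|\xi|^\gamma$ (the paper via Chow--Teicher, you via finiteness of $d_\gamma$ on $\meas^1_\gamma(\R)$), so they are morally equivalent; your version sidesteps the tree-indexed bookkeeping and is arguably more streamlined, while the paper's version has the advantage of treating (i) and (ii) with one and the same iteration scheme. One shared subtlety worth flagging (it affects both proofs equally, so it is not a gap in yours relative to the paper's): when $\gamma>1$, the bound $|1-\phi_s(\xi)|\lesssim|\xi|^\gamma$ and, in your setting, finiteness of $d_\gamma(\phi_s,\tilde\phi_s)$ require that the competing solution stays centered (i.e., in $\meas^1_\gamma(\R)$), whereas the uniqueness statement assumes only the moment bound $\sup_{0\le s\le t}\int|x|^\gamma\,\mu_s(\dx)<\infty$; this is readily remedied by deriving $\partial_t\int x\,\mu_t(\dx)=\Phi(1)\int x\,\mu_t(\dx)$ from the equation, but it should be said.
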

\begin{proof}
First, we provide an equation, which is equivalent to \eqref{eq:kinetic-type equation FT}
and easier to work with.
If $\phi_t$ is a solution to the kinetic-type equation \eqref{eq:kinetic-type equation FT},
then it satisfies the integral equation
\begin{align*}
e^t\phi_t(\xi)-\phi_0(\xi) = \int_0^te^s\widehat Q(\phi_s)(\xi) \, \ds,	\quad	t \geq 0,\;\xi \in \R,
\end{align*}
and vice versa.
Recall that $E = E(\varnothing)$ is a unit-mean exponential random variable.
With the convention that $\phi_t=\phi_0$ for $t \leq 0$, in view of \eqref{eq:Q_hat},
the above equation can be rewritten as
\begin{align}	\label{eq:kinetic-type equation FT version 2}
\phi_t(\xi) = e^{-t}\phi_0(\xi) + \int_0^t e^{-s}\widehat Q(\phi_{t-s})(\xi) \, \ds
= \E\bigg[\prod_{u\in \I_t^1} \phi_{t-E}(e^{-S(u)} \xi)\bigg],
\end{align}
valid for $t \geq 0$ and $\xi \in \R$
where
\begin{equation*}
\I_t^n = \I_t^{n,1} \cup \I_t^{n,2}\defeq\{u\in \I_t:  |u| \leq n\} \cup \{ u\in \I: \sigma(u) \leq t, |u|=n\}
\end{equation*}
for any $n \in \N_0$.
We show that the  function $\psi_t(\xi)\defeq\E[\exp(\imag \xi U_t)]$
satisfies \eqref{eq:kinetic-type equation FT version 2}
provided that condition (i) or (ii) of Theorem \ref{Thm:existence and uniqueness} holds.
Indeed, for any $t \geq 0$,
\begin{align*}
\E\big[\exp(\imag\xi U_t)\big|(A,E)\big]
= \1_{\{E > t\}} \E\big[\exp(\imag\xi X)\big]
+ \1_{\{E \leq t\}}\prod_{|u|=1}\psi_{t-E}( e^{-S(u)} \xi),
\end{align*}
and therefore\footnote{
Notice that the a.\,s.\ finiteness of $U_t$ and $N<\infty$ a.\,s.\ are the only assumptions
required to draw this conclusion.}
\begin{align*}
\psi_t(\xi)
&= \E\bigg[\1_{\{E > t\}} \phi_0(\xi) + \1_{\{E \leq t\}} \prod_{|u|=1}\psi_{t-E}(e^{-S(u)}\xi)\bigg]
= \E\bigg[\prod_{u\in \I_t^1} \psi_{t-E}(e^{-S(u)} \xi)\bigg].
\end{align*}
Let us also note that if $\mu_0\in\meas^1_\gamma(\R)$ and $\Phi(\gamma)<\infty$, then
by \eqref{eq:moment subadditivity} and \eqref{eq:m(t,theta)} we infer
\begin{equation*}	\textstyle
\E[|U_t|^\gamma] \leq 2 e^{t\Phi(\theta)} \int |x|^\gamma \, \mu_0(\dx),
\end{equation*}
which is locally bounded. Moreover,  $\E[U_t]=0$ for all $t \geq 0$ if $\gamma>1$.

\noindent
Now we prove that $\psi_t$ is the only solution to \eqref{eq:kinetic-type equation FT version 2}.
Let $(\phi_t)_{t \geq 0}$ be any solution with initial condition $\phi_0$.
Inductively, for any $n \in \N_0$,
iterating \eqref{eq:kinetic-type equation FT version 2} we get
\begin{align*}
\phi_t(\xi) &= \E\bigg[\prod_{u\in \I_t^n} \phi_{t-\sigma(u)}(e^{-S(u)} \xi)\bigg]	\\
&= \E\bigg[\prod_{u\in \I_t^{n,1}} \phi_{t-\sigma(u)}(e^{-S(u)} \xi) \cdot \prod_{u\in \I_t^n \setminus \I_t^{n,1}} \phi_{t-\sigma(u)}(e^{-S(u)} \xi)\bigg].
\end{align*}
We show that, for fixed $t \geq 0$,
\begin{align}
\label{eq:product tends to 1}
\prod_{u\in \I_t^n \setminus \I_t^{n,1}} \phi_{t-\sigma(u)}(e^{-S(u)}\xi)
\to 1	\quad \text{in }L^1 \text{ as } n \to \infty.
\end{align}
This is clear if (i) holds since then, with probability one, $\I_t$ is finite
and hence the product above is eventually indexed by the empty set.
On the other hand, if (ii)
and the additional assumption $\sup_{0 \leq s \leq t} \int |x|^\gamma \, \mu_s(\dx)<\infty$ for all $t \geq 0$ hold,
using \cite[Theorem 1 on p.\;295]{Chow+Teicher:1997}
we infer existence of a function $t\mapsto C(t) \geq 0$ such that
\begin{equation*}
\sup_{0 \leq s \leq t} |1-\phi_s(\xi)| \leq C(t) |\xi|^\gamma	\quad	\text{for all } \xi \in \R.
\end{equation*}
Using this together with $\I_t^n \setminus \I_t^{n,1} = \{u \in \I: |u|=n, S(u) + E(u) \leq t\}$
and the elementary inequality $|1-\prod_k z_k| \leq \sum_k |1-z_k|$,
valid for $z_k \in \C$ with $|z_k| \leq 1$,	
we conclude
\begin{align*}
\bigg|1-\prod_{{u\in \I_t^n \setminus \I_t^{n,1}}} \phi_{t-\sigma(u)}(e^{-S(u)}\xi) \bigg|
\leq C(t) |\xi|^\gamma \sum_{\substack{|u|=n, \\ \sigma(u) + E(u) \leq t}} e^{-\gamma S(u)}.
\end{align*}
Since
\begin{equation*}
\E\bigg[\sum_{\substack{|u|=n, \\ \sigma(u) + E(u) \leq t}} e^{-\gamma S(u)}\bigg]
\leq (\Phi(\gamma)+1)^n \sum_{k > n} e^{-t}\frac{t^{k}}{k!} \to 0,
\end{equation*}
as $n$ goes to infinity, we conclude \eqref{eq:product tends to 1}.
Consequently, in both cases we have
\begin{align*}
\phi_t(\xi)
&= \lim_{n\to\infty} \E\bigg[\prod_{u\in \I_t^n} \phi_{t-\sigma(u)}(e^{-S(u)} \xi)\bigg]
= \lim_{n\to\infty} \E\bigg[\prod_{u\in \I_t^{n,1}} \phi_{0}(e^{-S(u)} \xi)\bigg]	\\
& =\lim_{n\to\infty} \E\bigg[\exp\bigg(\imag\xi\sum_{u\in \I_t^{n,1}} e^{-S(u)} X_u\bigg)\bigg]
=\E\big[\exp(\imag \xi U_t)\big]
=\psi_t(\xi).
\end{align*}
\end{proof}

The above result provides an explicit form of the solution to Equation \eqref{eq:kinetic-type equation}.
Therefore, in order to prove our main result we need to find an appropriate
scaling of the random sum \eqref{eq:random sum}
leading to a nontrivial limit law as $t \to \infty$. For this purpose,
first applying the Croft-Kingman lemma \cite{Kingman:1963},
we reduce the problem of describing convergence along any sequence to convergence along arbitrary lattice sequences
(Section \ref{sec:reduction to the lattice case}).
Finally, we show the existence of the limit along lattice sequences (Section \ref{sec:lattice convergence}).

\section{Reduction to the lattice case}	\label{sec:reduction to the lattice case}

The goal of this section is to prove the following lemma.

\begin{lemma}	\label{Lem:lattice to arbitrary convergence}
Suppose that   (A\ref{enum:non-lattice}) through (A\ref{enum:XlogX condition}) holds,
$\mu_0\in\meas^1_\gamma(\R)$ for some $\gamma \in (\vartheta,2]$
and that, for any fixed $\delta >0$,
\begin{equation}	\label{eq:random sum scaled lattice}
(n\delta)^{\frac3{2\vartheta}} e^{-F(\vartheta) n\delta} U_{n\!\delta}	\distto	Z_\delta
\quad	\text{as } n \to \infty
\end{equation}
for some non-degenerate random variable $Z_\delta$. Then
\begin{equation}	\label{eq:random sum scaled limit}
t^{\frac3{2\vartheta}} e^{-F(\vartheta) t} U_t \distto Z_1
\quad	\text{as } t \to \infty.
\end{equation}
Moreover, the   random variable $Z\defeq Z_1$ satisfies \eqref{eq:smoothing equation}.
\end{lemma}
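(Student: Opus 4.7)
The plan is to apply the Croft--Kingman lemma separately at every test frequency $\xi \in \R$, working at the level of characteristic functions. Set $V_t \defeq t^{\frac{3}{2\vartheta}} e^{-F(\vartheta) t} U_t$ and $\psi_t(\xi) \defeq \E[\exp(\imag \xi V_t)]$. The hypothesis \eqref{eq:random sum scaled lattice} gives, for each fixed $\xi$ and each $\delta > 0$, the convergence of $n \mapsto \psi_{n\delta}(\xi)$. Provided $t \mapsto \psi_t(\xi)$ is continuous on $(0,\infty)$, Croft--Kingman produces a full limit $\phi(\xi) \defeq \lim_{t \to \infty} \psi_t(\xi)$, which the $\delta = 1$ case identifies with $\E[\exp(\imag \xi Z_1)]$. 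This is a genuine characteristic function, hence continuous at $0$, so L\'evy's continuity theorem delivers \eqref{eq:random sum scaled limit}.

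Verifying continuity of $\psi_\cdot(\xi)$ is the easy technical step. The process $(U_s)_{s \geq 0}$ is c\`adl\`ag, being constant between the birth times $\sigma(u)$ of the branching process, and each $\sigma(u)$ is a finite sum of independent exponentials, hence absolutely continuous. Consequently, for every fixed $t > 0$ the event $\{t = \sigma(u) \text{ for some } u\}$ is a countable union of null sets, so $U_{t'} \to U_t$ almost surely as $t' \to t$. Combined with continuity of the deterministic scaling factor, bounded convergence yields $\psi_{t'}(\xi) \to \psi_t(\xi)$.

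To establish the smoothing equation \eqref{eq:smoothing equation} for $Z \defeq Z_1$, I would exploit the branching decomposition at the first split, analogous to the one used in the proof of Proposition~\ref{Prop:Wild representation}:
\[
U_t = \1_{\{E > t\}} X_\varnothing + \1_{\{E \leq t\}} \sum_{j=1}^{N} A_j \, \tilde U^{(j)}_{t-E},
\]
where $(\tilde U^{(j)})_{j \geq 1}$ are i.i.d.\ copies of the process $U_\cdot$, independent of $(A, E)$. After multiplying by $t^{\frac{3}{2\vartheta}} e^{-F(\vartheta) t}$, the first term vanishes in $L^1$ since $\Prob(E > t) = e^{-t}$, and the second rewrites as $c_t(E) \sum_{j=1}^{N} A_j \, \tilde V^{(j)}_{t-E}$ with $c_t(E) \defeq (t/(t-E))^{\frac{3}{2\vartheta}} e^{-F(\vartheta) E} \to e^{-F(\vartheta) E}$ almost surely. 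Conditioning on $(A, E)$ and using independence of the subtrees, the conditional characteristic function equals $\prod_{j=1}^{N} \psi_{t-E}(\xi \, c_t(E) \, A_j)$. Because weak convergence $V_s \distto Z$ (already obtained above) implies uniform-on-compacts convergence of characteristic functions, each factor converges almost surely to $\phi_Z(\xi e^{-F(\vartheta) E} A_j)$; since $N < \infty$ a.s., the product does as well. Dominated convergence with the trivial bound $1$ then yields
\[
\phi_Z(\xi) = \E\Bigg[\prod_{j=1}^{N} \phi_Z\big(\xi \, e^{-F(\vartheta) E} A_j\big)\Bigg],
\]
which is precisely the characteristic function identity corresponding to \eqref{eq:smoothing equation} under the change of variables $U = e^{-E} \sim \mathrm{Uniform}(0,1)$.

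The main potential difficulty is justifying the interchange of limit and conditional product in this last step: the finiteness of $N$ and the uniform bound $\lvert \psi_s \rvert \leq 1$ control the product, but one must argue that uniform-on-compacts convergence of characteristic functions, applied pathwise in $(A, E)$, combines correctly with $c_t(E) \to e^{-F(\vartheta) E}$ to give convergence of each factor. Everything else -- the Croft--Kingman step, the c\`adl\`ag continuity check, and the appeal to L\'evy -- is essentially routine once this identification is secured.
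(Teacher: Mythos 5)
Your overall strategy (pointwise Croft--Kingman plus a continuity check) matches the paper's proof, which applies Croft--Kingman to $h(t) = \E[f(t^{3/(2\vartheta)}e^{-F(\vartheta)t}U_t)]$ for bounded test functions $f$ with bounded derivative; working with characteristic functions is an equivalent choice of test class. Your derivation of the smoothing equation also mirrors the paper's branching decomposition at the first split, though the claim that the first term vanishes ``in $L^1$'' is not quite justified as stated: whether $t^{3/(2\vartheta)}e^{-F(\vartheta)t}e^{-t}$ tends to zero depends on the sign of $F(\vartheta)+1$, and the clean argument is convergence in probability, via $\Prob(E>t)\to 0$.

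The continuity step, however, contains a genuine gap. You argue that $s\mapsto U_s$ is a.s.\ c\`adl\`ag --- piecewise constant between split times, which have absolutely continuous laws --- and deduce $U_{t'}\to U_t$ a.s.\ for each fixed $t>0$. This is valid only if $\I_t$ is a.s.\ finite, i.e.\ if the underlying Markov branching process does not explode in finite time, which is exactly what assumption (A\ref{enum:finitness of BRW}) guarantees. But (A\ref{enum:finitness of BRW}) is \emph{not} among the hypotheses of Lemma~\ref{Lem:lattice to arbitrary convergence}: only (A\ref{enum:non-lattice})--(A\ref{enum:XlogX condition}) are assumed, and these do not force $\E[N]<\infty$ (one can have $A_j$ tiny for most $j$ so that $\E\big[\sum_j A_j^\vartheta\big]<\infty$ while $\E[N]=\infty$). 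If the process explodes, then for $t$ past the explosion time $\I_t$ is infinite, and infinitely many splits accumulate arbitrarily close to $t$ (the infimum of infinitely many residual $\mathrm{Exp}(1)$ lifetimes is $0$), so $\I_{t'}\ne\I_t$ for every $t'>t$ and the pathwise c\`adl\`ag reasoning breaks down; $U_t$ is then only defined as an $L^\vartheta$-limit. The paper circumvents this by proving in Lemma~\ref{Lem:L^r-continuous} that $t\mapsto U_t$ is continuous in $L^\vartheta$ (using the $X\log X$-type bounds of Lemma~\ref{lem:XlogX}, with no appeal to (A\ref{enum:finitness of BRW})), and then deduces continuity of $h$ from the estimate $|h(s)-h(t)|\le C\,\E[|V_s-V_t|^{\vartheta\wedge 1}]$. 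In your characteristic-function formulation the same repair works: $|e^{\imag\xi x}-e^{\imag\xi y}|\le(|\xi||x-y|)\wedge 2 \le C_\xi|x-y|^{\vartheta\wedge 1}$, so $L^\vartheta$-continuity of $(U_t)$ again yields continuity of $t\mapsto\psi_t(\xi)$, and the rest of your argument goes through.
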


The lemma above is proved in several steps.
First, for $p > 0$ and $x \geq 0$, we define $f_p(x) \defeq (1+\log_+^px)x$
and notice that $f_p$ is nearly submultiplicative
in the sense that
\begin{equation}	\label{eq:f_p submultiplicative}
f_p(xy) \leq 2^p f_p(x)f_p(y)		\quad \text{for all }	x,y \geq 0.
\end{equation}
Similarly, $f_p$ is subadditive up to a multiplicative constant, namely,
\begin{equation}	\label{eq:f_p subadditive}
f_p(x+y) \leq 2(1+\log^p 2) (f_p(x)+f_p(y))		\quad \text{for all }	x,y \geq 0.
\end{equation}
Further, let
\begin{align}	\label{eq:h_p}
h_p(x)	&\defeq	\int_0^x \Big(\frac{p}{e}\Big)^pt\1_{[0,e^p]}(t) + (\log t)^p\1_{(e^p,\infty)}(t) \, \dt.
\end{align}
Then $h_p$ is convex with concave derivative $h'_p$.
Further, $f_p$ and $h_p$ are asymptotically equivalent, i.e.,
\begin{equation}	\label{eq:f_p sim h_p}
\lim_{x \to \infty} \frac{f_p(x)}{h_p(x)} = 1.
\end{equation}
Consequently, since $h_p'(0)=0<1=f_p'(0)$, there is some $C_p>0$ such that
\begin{equation}	\label{eq:h_p leq C_p f_p}
h_p(x) \leq C_p f_p(x)	\quad	\text{for all } x \geq 0.
\end{equation}
We start with a technical lemma.

\begin{lemma}	\label{lem:XlogX}
Suppose that
\begin{align*}
&\E\bigg[f_p\bigg(\sum_{|u|=1} e^{-S(u)}\bigg)\bigg] < \infty\text{ and }
\E\bigg[\sum_{|u|=1}f_p\big( e^{-S(u)}\big)\bigg] < \infty
\end{align*}
for some $p > 0$. Then there is a constant $C> 0$ such that, for any $t \geq 0$,
\begin{align}
\E \bigg[\sum_{u\in\I} f_p\big(e^{-S(u)}\big)\1_{\{\sigma(u) \leq t\}}\bigg]	&\leq C e^{Ct}	\label{eq:sum xlogx}	\\
\text{and}	\qquad
\E\bigg[f_p\bigg(\sum_{u \in \I_t} e^{-S(u)}\bigg)\bigg]				&\leq C e^{Ct}.	\label{eq:xlogx sum}
\end{align}
\end{lemma}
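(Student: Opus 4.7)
My plan for \eqref{eq:sum xlogx} is to decompose the sum by generation. Setting $W_n \defeq \sum_{|u|=n} f_p(e^{-S(u)})$, the multiplicative factorization $e^{-S(u)} = e^{-S(u|_{n-1})}\, A_{u_n}(u|_{n-1})$ together with the near-submultiplicativity \eqref{eq:f_p submultiplicative} and the i.i.d.\ structure of the children vectors $(A(v))_{v\in\I}$ across the tree yields, by induction on $n$, the geometric bound $\E[W_n] \leq a^n$, where $a\defeq 2^p\,\E\big[\sum_{|u|=1}f_p(e^{-S(u)})\big] < \infty$ by hypothesis. Since the birth times $\sigma(u)$ depend only on the independent exponential lifetimes $E(\cdot)$ and satisfy $\sigma(u)\eqdist T_n$ whenever $|u|=n$, factoring the expectation gives
\begin{equation*}
\E\bigg[\sum_{u\in\I} f_p(e^{-S(u)})\,\1_{\{\sigma(u)\le t\}}\bigg]
= \sum_{n\ge 0}\E[W_n]\,\Prob(T_n\le t)
\le \sum_{n\ge 0} a^n\,\Prob(T_n\le t),
\end{equation*}
and an elementary Poisson tail calculation (expanding $\Prob(T_n\le t) = \sum_{k\ge n}e^{-t}t^k/k!$ and applying Fubini) turns the right-hand side into $Ce^{Ct}$.

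For \eqref{eq:xlogx sum} the main difficulty is that $f_p$ need not be globally convex (already for $p<1$), so a blind $N$-fold application of the subadditivity \eqref{eq:f_p subadditive} to the sum $\sum_j A_j M_{t-E}^{(j)}$ would cost an unaffordable factor $C^N$. My strategy is therefore to work with the convex companion $h_p$: since $f_p$ and $h_p$ are equivalent at infinity by \eqref{eq:f_p sim h_p} and $f_p(x)\le 1$ on $[0,1]$, one has a pointwise bound $f_p(x) \leq C(1+h_p(x))$, so it suffices to show that $G(t) \defeq \E[h_p(M_t)] \leq Ce^{Ct}$ for $M_t \defeq \sum_{u\in\I_t}e^{-S(u)}$. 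Conditioning on the ancestral lifetime $E=E(\varnothing)$ and iterating the branching recursion once, I get the Markov renewal identity
\begin{equation*}
G(t) = e^{-t}h_p(1) + \int_0^t e^{-s}\,\E\Big[h_p\big(A^*\,\overline{Y}_{t-s}\big)\Big]\,\ds,
\end{equation*}
where $A^* \defeq \sum_j A_j$, $\overline{Y}_{s} \defeq \sum_j (A_j/A^*)\, M^{(j)}_{s}$ (with $0/0=0$), and the $M^{(j)}$ are i.i.d.\ copies of $M$, independent of $(A,E)$.

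The crucial pointwise step, which I expect to be the most delicate part, is to separate the scalar factor $A^*$ from the weighted average $\overline{Y}$. Chaining \eqref{eq:h_p leq C_p f_p}, the submultiplicativity \eqref{eq:f_p submultiplicative} and $f_p \leq C(1+h_p)$ gives
\begin{equation*}
h_p(A^*\overline{Y}) \leq C_p f_p(A^*\overline{Y}) \leq C_p\,2^p f_p(A^*)\,f_p(\overline{Y}) \leq C f_p(A^*)\bigl(1+h_p(\overline{Y})\bigr),
\end{equation*}
and Jensen's inequality applied to the convex function $h_p$ with probability weights $(A_j/A^*)_j$ then yields $h_p(\overline{Y}_s)\leq \sum_j(A_j/A^*)\,h_p(M^{(j)}_s)$. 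Taking expectations, using independence of $(A,E)$ from $(M^{(j)})_j$ and the hypothesis $\E[f_p(A^*)]<\infty$, the Markov renewal identity collapses to the scalar inequality
\begin{equation*}
G(t) \leq C + C\int_0^t e^{-s}\,G(t-s)\,\ds.
\end{equation*}
Multiplying by $e^t$, substituting $r=t-s$ and invoking Gronwall produce $G(t) \leq Ce^{Ct}$, which together with $f_p\leq C(1+h_p)$ gives \eqref{eq:xlogx sum}. A routine truncation ($M_t$ replaced by $M_t\wedge K$ and $K\to\infty$ via monotone convergence) is needed to justify a priori finiteness of $G(t)$ before running Gronwall.
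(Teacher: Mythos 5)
Your proof of \eqref{eq:sum xlogx} coincides with the paper's: generation-by-generation decomposition, the geometric bound $\E[W_n]\le a^n$ via \eqref{eq:f_p submultiplicative}, independence of positions and birth times, and a Poisson tail estimate.

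For \eqref{eq:xlogx sum} your route is genuinely different from the paper's, and it is correct in substance. The paper bounds $\sum_{u\in\I_t}e^{-S(u)}$ by the monotone quantity $M^t=\sum_{u\in\I}e^{-S(u)}\1_{\{\sigma(u)\le t\}}$, centres it by its mean $H(t)$, introduces the vertex-indexed martingale $M^t_n=\E[M^t\mid\F_{\I^{(n)}}]$, and controls $\E[h_p(|M^t-H(t)|)]$ with the Topchi\u{\i}--Vatutin inequality, reducing everything to $\sum_n\E[f_p(D_n^t)]$ and ultimately to \eqref{eq:sum xlogx} and the hypothesis $\E[f_p(\sum_{|u|=1}e^{-S(u)})]<\infty$. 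You instead exploit the one-step branching recursion $M_t=\1_{\{E>t\}}+\1_{\{E\le t\}}\sum_j A_j M^{(j)}_{t-E}$, factor out the scalar $A^*=\sum_j A_j$ and apply Jensen to the convex $h_p$ with weights $A_j/A^*$, which after chaining \eqref{eq:h_p leq C_p f_p}, \eqref{eq:f_p submultiplicative} and $f_p\le C(1+h_p)$ yields the scalar renewal inequality $G(t)\le C+C\int_0^te^{-s}G(t-s)\,\ds$ and a Gronwall conclusion. This avoids the martingale machinery and Topchi\u{\i}--Vatutin entirely, at the cost of requiring an a priori finiteness argument before Gronwall. The paper's approach buys cleaner handling of that issue (since $M^t$ is monotone in $t$ and the martingale increments are summable by construction), and ties the second bound back to the first bound \eqref{eq:sum xlogx}, which your argument does not use.

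One technical caveat in your write-up: the truncation $M_t\wedge K$ does not propagate cleanly through the branching recursion, because $\bigl(\sum_j A_jM^{(j)}\bigr)\wedge K$ cannot be bounded by $\sum_j A_j\bigl(M^{(j)}\wedge K\bigr)$ in general, so the renewal inequality for $G_K$ is not immediate. The standard fix is to truncate by generation instead: with $M_t^{(n)}\defeq\sum_{u\in\I_t,|u|\le n}e^{-S(u)}$ the recursion becomes $M_t^{(n)}=\1_{\{E>t\}}+\1_{\{E\le t\}}\sum_j A_jM^{(j),(n-1)}_{t-E}$, one proves $G_n(t)\defeq\E[h_p(M_t^{(n)})]<\infty$ by a short induction on $n$, the same Jensen/Gronwall step gives a bound $G_n(t)\le Ae^{Bt}$ with $A,B$ independent of $n$, and monotone convergence finishes. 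With this adjustment your argument goes through and is sound.
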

\begin{proof}
Throughout the proof, if some quantity depending on $t \geq 0$ is bounded by $C e^{Ct}$ for all $t \geq 0$
and some constant $C>0$, then we say that the quantity grows at most exponentially fast.
Using \eqref{eq:f_p submultiplicative} and induction on $n$, we infer
\begin{align}
\E\bigg[\sum_{|u|=n}f_p\big(e^{-S(u)}\big)\bigg] \leq q^n,
\end{align}
for some $q>1$. Then
\begin{align}
\E \bigg[\sum_{u\in\I}f_p\big(e^{-S(u)}\big)\1_{\{\sigma(u) \leq t\}}\bigg]
&= \sum_{n \geq 0}\E \bigg[\sum_{|u|=n}f_p\big(e^{-S(u)}\big) \1_{\{\sigma(u) \leq t\}} \bigg]	\label{eq:2.11}	\\
&\leq \sum_{n \geq 0}q^n \Prob(T_n \leq t) = \frac{qe^{t(q-1)}-1}{q-1}<\infty,	\notag
\end{align}
proving \eqref{eq:sum xlogx}.
Turning to the proof of \eqref{eq:xlogx sum}, we first notice that, for every $t \in \R$,
\begin{equation*}
\sum_{u \in \I_t} e^{-S(u)} \leq \sum_{u \in \I} e^{-S(u)} \1_{\{\sigma(u) \leq t\}} \eqdef M^t.
\end{equation*}
(Here, for $t < 0$, both sums are empty and hence have value $0$.)
Since $f_p$ is monotone, it suffices to prove that $\E[f_p(M^t)]$ grows at most exponentially fast.
Since $f_p(x) \geq x$ for any $x \geq 0$, we conclude from \eqref{eq:sum xlogx}
that $H(t) \defeq \E[M^t] \leq C'e^{C't}$ for all $t \geq 0$
and an appropriate constant $C'>0$.
Thus, by \eqref{eq:f_p subadditive}, for all $t \geq 0$,
\begin{align*}
\E[f_p(M^t)] &= \E[f_p(M^t-H(t) + H(t))]	\\
&\leq 2(1+\log^p 2) \big(\E[f_p(|M^t-H(t)|)] + f_p(H(t))\big)	\\
&\leq 2(1+\log^p 2) \big(\E[f_p(|M^t-H(t)|)] + f_p(C'e^{C't})\big).
\end{align*}
Therefore, it suffices to prove that $\E[f_p(|M^t-H(t)|)]$ grows at most exponentially fast in $t$.
By \eqref{eq:f_p sim h_p}, there is a constant $C''$ such that
\begin{equation*}
\E[f_p(|M^t-H(t)|)] \leq 2\E[h_p(|M^t-H(t)|)] + C''	\quad	\text{for all }	t \geq 0,
\end{equation*}
so it suffices to prove that $\E[h_p(|M^t\!-\!H(t)|)]$ grows at most exponentially fast in $t$.
To this end, let $\varnothing = u_1,u_2, \ldots \in \I$ be a deterministic enumeration of $\I$
such that, with $\I^{(n)}=\{u_1,\ldots,u_n\}$, the sequence $(\I^{(n)})_{n \in \N}$ is a strictly increasing
sequence of subtrees of $\I$, and let
\begin{equation*}
\F_{\I^{(n)}} \defeq \sigma((A(u),E(u)): u \in \I^{(n)}).
\end{equation*}
Then $M^t_n \defeq \E[M^t | \F_{\I^{(n)}}]$, $n \in \N$ is a (uniformly integrable) martingale.
By the martingale convergence theorem, $M^t_n \to M^t$ a.\,s.\ and in $L^1$.
Consider the martingale differences
\begin{align*}
M^t_n-M^t_{n-1}
&= \sum_{u \in \I} \big(\E[e^{-S(u)}\1_{\{\sigma(u) \leq t\}} | \F_{I^{(n)}}]
- \E[e^{-S(u)}\1_{\{\sigma(u) \leq t\}} | \F_{I^{(n-1)}}]\big),
\ n \in \N.
\end{align*}
If $u$ is not a strict descendant of $u_n$, i.e., if there is no $v \in \I$ with $|v| \geq 1$ such that
$u = u_n v$, then $(A(u_n),E(u_n))$ is independent
of the $\sigma$-algebra generated by $e^{-S(u)}\1_{\{\sigma(u) \leq t\}}$ and $\F_{I^{(n-1)}}$,
hence, $\E[e^{-S(u)} | \F_{I^{(n)}}] = \E[e^{-S(u)} | \F_{I^{(n-1)}}]$ a.\,s.
Hence, with
\begin{align*}
D_n^t
&\defeq \E\bigg[\sum_{|v| \geq 1} e^{-S(u_n v)}\1_{\{\sigma(u_n v) \leq t\}} | \F_{I^{(n)}} \bigg]	\\
&= \sum_{j=1}^{N(u_n)} e^{-S(u_nj)} \E\bigg[\sum_{v \in \I} e^{-(S(u_njv)-S(u_nj))} \1_{\{\sigma(u_njv)-\sigma(u_nj) \leq t - \sigma(u_nj)\}} \Big| \F_{\I^{(n-1)}}\bigg]	\\
&= \sum_{j=1}^{N(u_n)} e^{-S(u_nj)} H(t-\sigma(u_nj))	\quad	\text{a.\,s.,}
\end{align*}
we have
\begin{equation}	\label{eq:M^t_n-M^t_n-1=D_n^t-E[D_n^t|F_n-1]}
M^t_n-M^t_{n-1} = D_n^t - \E[D_n^t | \F_{\I^{(n-1)}}]	\quad	\text{a.\,s.}
\end{equation}
Since $h_p(0)=0$, $h_p$ is increasing and convex with concave derivative,
we may apply the Topchi\u\i-Vatutin inequality \cite{Alsmeyer+Roesler:2003}
and infer
\begin{align*}
\E[h_p(|M^t-H(t)|)] &\leq 2 \sum_{n=1}^\infty \E[h_p(|M^t_n-M^t_{n-1}|)]	\\
&\leq 2 \sum_{n=1}^\infty \big(\E[h_p(D_n^t)] + \E[h_p(\E[D_n^t | \F_{\I^{(n-1)}}])]\big)	\\
&\leq 4 \sum_{n=1}^\infty \E[h_p(D_n^t)]
\leq 4 C_p \sum_{n=1}^\infty \E[f_p(D_n^t)],
\end{align*}
where we have applied Jensen's inequality for conditional expectations in the next-to-last step
and \eqref{eq:h_p leq C_p f_p} in the last step. Here, using the definition of $D_n^t$,
$H(t) \leq C' e^{C't} \1_{[0,\infty)}(t)$ for all $t \in \R$, and \eqref{eq:f_p submultiplicative} (twice), we find
\begin{align*}
\sum_{n=1}^\infty \E[f_p(D_n^t)]
&= \sum_{u \in \I} \E\bigg[f_p\bigg(\sum_{j=1}^{N(u)} e^{-S(uj)} H(t-\sigma(uj))\bigg)\bigg]	\\
&\leq \sum_{u \in \I} \E\bigg[f_p\bigg(\sum_{j=1}^{N(u)} e^{-S(uj)} \1_{\{\sigma(uj) \leq t\}} C' e^{C't}\bigg)\bigg]	\\
&\leq 2^p f_p(C' e^{C't}) \sum_{u \in \I} \E\bigg[f_p\bigg(\sum_{j=1}^{N(u)} e^{-S(uj)} \1_{\{\sigma(uj) \leq t\}} \bigg)\bigg]	\\
&\leq 4^p f_p(C' e^{C't}) \sum_{u \in \I} \E\bigg[f_p(e^{-S(u)}) \1_{\{\sigma(u) \leq t\}} f_p\bigg(\sum_{j=1}^{N(u)} e^{-(S(uj)-S(u))} \bigg)\bigg]	\\
&= 4^p f_p(C' e^{C't}) \E \bigg[\sum_{u \in \I} f_p(e^{-S(u)}) \1_{\{\sigma(u) \leq t\}} \bigg]
\E\bigg[ f_p\bigg(\sum_{|u|=1} e^{-S(u)} \bigg)\bigg],
\end{align*}
which is finite and grows at most exponentially fast by \eqref{eq:2.11} and since the last expectation
is finite by assumption.
\end{proof}

\begin{lemma}	\label{Lem:L^r-continuous}
Suppose that the assumptions of Lemma \ref{Lem:lattice to arbitrary convergence} hold.
 Then the family $(U_t)_{t \geq 0}$ is continuous in $L^\vartheta$,
i.e., $\E[|U_t-U_s|^\vartheta] \to 0$ as $s \to t$.
In particular, if $a:[0,\infty) \to [0,\infty)$ is a deterministic nonnegative continuous function,
then also $(a(t) U_t)_{t \geq 0}$ is continuous in $L^\vartheta$.
\end{lemma}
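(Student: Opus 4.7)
My proposed proof proceeds via a decomposition over the random symmetric difference $\I_s \triangle \I_t$, followed by a conditional moment estimate. Assume without loss of generality $s \leq t$. Since contributions indexed by $\I_s \cap \I_t$ appear identically in $U_s$ and $U_t$, they cancel and leave
\[
U_t - U_s = \sum_{u \in \I_t \setminus \I_s} e^{-S(u)} X_u - \sum_{u \in \I_s \setminus \I_t} e^{-S(u)} X_u.
\]
I would then condition on $\F_A \defeq \sigma((A(u), E(u)): u \in \I)$, under which $\I_s \triangle \I_t$ and the positions $S(u)$ become deterministic while the $X_u$ remain independent and $\mu_0$-distributed. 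Since $\gamma \in (\vartheta, 2]$ and $\mu_0 \in \meas^1_\gamma(\R)$, we also have $\mu_0 \in \meas^1_\vartheta(\R)$: the centering requirement in the case $\vartheta > 1$ is automatic because then $\gamma > 1$. Applying the moment inequality \eqref{eq:moment subadditivity} with exponent $\vartheta$ conditionally on $\F_A$ and taking the outer expectation yields
\[
\E[|U_t - U_s|^\vartheta] \leq 2 \int |x|^\vartheta \, \mu_0(\dx) \cdot \E\bigg[\sum_{u \in \I_s \triangle \I_t} e^{-\vartheta S(u)}\bigg].
\]

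The next step is an explicit intensity computation in the spirit of \eqref{eq:m(t,theta)}. Writing $m(s, t, \vartheta) \defeq \E[\sum_{u \in \I_s \cap \I_t} e^{-\vartheta S(u)}]$, the joint event $\{\sigma(u) \leq s,\, \sigma(u) + E(u) > t\}$ for a particle of generation $n$ has probability $\Prob(T_n \leq s,\, T_n + E > t) = e^{-t} s^n/n!$ (where $E$ is an independent unit-mean exponential), so that
\[
m(s, t, \vartheta) = \sum_{n \geq 0}(\Phi(\vartheta)+1)^n \, e^{-t} \frac{s^n}{n!} = e^{-(t-s)} e^{s \Phi(\vartheta)}.
\]
Combined with $m(s, s, \vartheta) = e^{s\Phi(\vartheta)}$, $m(t, t, \vartheta) = e^{t\Phi(\vartheta)}$, inclusion-exclusion then gives
\[
\E\bigg[\sum_{u \in \I_s \triangle \I_t} e^{-\vartheta S(u)}\bigg] = e^{s\Phi(\vartheta)} + e^{t\Phi(\vartheta)} - 2 e^{-(t-s)} e^{s\Phi(\vartheta)},
\]
which clearly vanishes as $t - s \to 0$. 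Inserted above, this proves the $L^\vartheta$-continuity of $(U_t)_{t \geq 0}$.

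For the ``moreover'' assertion, the (quasi-)triangle inequality in $L^\vartheta$ gives
\[
\E[|a(t) U_t - a(s) U_s|^\vartheta] \leq c_\vartheta \bigl( |a(t) - a(s)|^\vartheta \E[|U_s|^\vartheta] + |a(t)|^\vartheta \E[|U_t - U_s|^\vartheta] \bigr)
\]
for a constant $c_\vartheta$ depending only on whether $\vartheta \leq 1$ or $\vartheta > 1$; the bound $\E[|U_s|^\vartheta] \leq 2 \int |x|^\vartheta \mu_0(\dx)\, e^{s\Phi(\vartheta)}$ is produced by the same conditioning argument applied to $U_s$ itself, and both right-hand terms vanish as $s \to t$ by continuity of $a$ and the main claim. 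I do not anticipate a serious obstacle: the whole argument reduces to an elementary intensity calculation. The only subtle point is to verify that \eqref{eq:moment subadditivity} applies with exponent $\vartheta$, which is handled uniformly in the two cases $\vartheta \leq 1$ and $\vartheta > 1$ using the automatic centering noted above.
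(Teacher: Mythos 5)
Your proof is correct, and it takes a genuinely different and noticeably simpler route than the paper's. The paper first establishes continuity at the origin, i.e.\ $\E[|U_t - U_0|^\vartheta] \to 0$ as $t \to 0$, and then propagates this to general $s,t$ through the Markov branching relation $g(t,s) \leq m(t\wedge s,\vartheta)\,g(|t-s|,0)$. The continuity at $0$ is the hard part there: the paper has to control $\E[|U_t|^\vartheta \1_{\{E \leq t\}}]$, which it does via a combination of Lemma~\ref{lem:XlogX} (requiring the Topchi\u\i--Vatutin inequality and the auxiliary convex functions $h_p, g_\vartheta$ built via Karamata's theorem) and a generalized Young's inequality for the Legendre--Fenchel conjugate $h_1^*$. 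Your proof sidesteps all of that by coupling $U_s$ and $U_t$ on the common tree, writing the difference as a sum over the random symmetric difference $\I_s \triangle \I_t$, conditioning on $\F_A$ to get a sum of conditionally independent terms, applying \eqref{eq:moment subadditivity} at exponent $\vartheta$ (the centering check for $\vartheta > 1$ is exactly right), and then evaluating the resulting intensity via the Poisson computation $\Prob(T_n \le s,\, T_{n+1} > t) = e^{-t} s^n/n!$. The inclusion--exclusion identity $\E[\sum_{\I_s\triangle\I_t} e^{-\vartheta S(u)}] = e^{s\Phi(\vartheta)} + e^{t\Phi(\vartheta)} - 2e^{-(t-s)}e^{s\Phi(\vartheta)}$ is correct and vanishes as $s \to t$. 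The only price of the simplification is that it yields a quantitative modulus of continuity only in expectation of the coupled difference; but that is all the lemma asserts, and the paper itself also works with the coupled family, so nothing is lost. In short, your argument is a clean elementary proof that the paper could have used; the heavier machinery in the published proof of Lemma~\ref{Lem:L^r-continuous} appears to be a re-use of Lemma~\ref{lem:XlogX}, which the authors in any case need in Section~\ref{sec:lattice convergence}, rather than something intrinsically required here.
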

\begin{proof}
Define $g(t,s) \defeq \E[|U_t-U_s|^\vartheta]$ for $s,t \geq 0$.
We first show that $g(t,0) \to 0$ as $t \to 0$.
Notice that $U_0  = X_\varnothing = X \sim \mu_0$. Then,
with $S_t \defeq \{E \leq t\}$ denoting the event that there was a split
in the interval $[0,t]$, we have $\Prob(S_t) = 1-e^{-t}$.
Consequently,
\begin{align*}
g(t,0) &= \E[|U_t-X|^\vartheta \1_{S_t}]
\leq 2^\vartheta \E[|U_t|^\vartheta \1_{S_t}] + 2^\vartheta\E[|X|^\vartheta] (1-e^{-t}).
\end{align*}
The second summand on the right-hand side vanishes as $t \to 0$, so it remains to consider the first one.
For the function $h_1$ defined by \eqref{eq:h_p} with $p=1$
the expectation $\E[h_1(|U_t|^\vartheta)]$ remains bounded as $t$ goes to 0.
We postpone the proof of this fact and first show how it implies $\E[|U_t|^\vartheta \1_{S_t}]  \to 0$ as $t \to 0$.
Indeed, since $h_1$ is convex and grows superlinearly fast,
the Legendre-Fenchel transform $h_1^*(y) \defeq \sup_{x \geq 0} (xy - h_1(x))$
of the function $h_1$ is finite on the half-line $[0,\infty)$
and $h^*_1(y)\to\infty$ as $y \to \infty$.
From the definition of $h_1^*$, we conclude that $xy\le h_1(x)+h^*_1(y)$
for all $x,y \geq 0$ (a generalized version of Young's inequality) and $h_1^*(y) > 0$ iff $y > 0$.
Using these inequalities, \eqref{eq:h_p leq C_p f_p} and \eqref{eq:f_p submultiplicative},
we infer, for any $s_t > 1$,
\begin{align*}
\E[|U_t|^\vartheta \1_{S_t}]& \le \E[h_1(s_t^{-1}|U_t|^\vartheta)]+\E[h^*_1(s_t \1_{S_t})] \\
&\leq s_t^{-1}C\E[f_1(|U_t|^\vartheta)]+h^*_1(s_t)\Prob(S_t)
\end{align*}
where $C>0$ is an appropriate constant.
Taking now $s_t \to \infty$ such that $h_1^*(s_t)=\Prob(S_t)^{-1/2} = (1-e^{-t})^{-1/2} \to 0$ as $t \to 0$,
we conclude that the second summand tends to $0$ as $t \to \infty$.
Regarding the first, notice that \eqref{eq:f_p sim h_p} together with $\limsup_{t \to 0} \E[h_1(|U_t|^\vartheta)] < \infty$
implies that it also tends to 0 as $t \to 0$.

\noindent
We now turn to the proof of the fact that $\limsup_{t \to 0} \E[h_1(|U_t|^\vartheta)] < \infty$.
For technical reasons, we need to replace $h_1(|x|^\vartheta)$ by a function of the same order of growth
with more convenient properties.
To this end, first suppose that $\vartheta \in (1,2)$ and consider $g_\vartheta'': [0,\infty) \to [0,\infty)$
\begin{align*}
g_\vartheta''(u) \defeq	\begin{cases}
					\frac{1}{e(2-\vartheta)},	&	\text{ for } u \leq e^{1/(2-\vartheta)},	\\	
					u^{\vartheta - 2} \log u,	&	\text{ for } u \geq e^{1/(2-\vartheta)}.
					\end{cases}
\end{align*}
The function $g_\vartheta''$ is nonnegative, continuous and non-increasing,
hence $g_\vartheta: [0,\infty) \to [0,\infty)$, defined by
\begin{equation*}
g_\vartheta(x) = \int_0^x \int_0^t g_\vartheta''(u) \, \du \, \dt,	\quad	x \geq 0,
\end{equation*}
is convex with concave derivative.
Two applications of the direct half of Karamata's theorem \cite[Proposition 1.5.8]{Bingham+Goldie+Teugels:1987}
imply that
\begin{equation*}
g_\vartheta(x) \sim \frac{x^{\vartheta} \log x}{\vartheta(\vartheta - 1)} 	\quad	\text{as } x \to \infty.
\end{equation*}
Since $h_1(x^\vartheta) \sim \vartheta x^\vartheta \log x$ as $x \to \infty$, we have
$\limsup_{t \to 0} \E[h_1(|U_t|^\vartheta)] < \infty$ iff $\limsup_{t \to 0} \E[g_\vartheta(|U_t|)] < \infty$
Further, since we also have $f_1(x^\vartheta) \sim x^\vartheta$ as $x \to 0$
and $f_1(x^\vartheta) \sim \vartheta x^\vartheta \log x$ as $x \to \infty$,
whereas $g_\vartheta(x) \sim x^{2}/(2e(2-\vartheta))$ as $x \to 0$,
there exists a constant $C_{\vartheta} > 0$
such that $g_\vartheta(x) \leq C_\vartheta f_1(x^\vartheta)$ for all $x \geq 0$.
A combination of this inequality, the (conditional) Topchi\u\i-Vatutin inequality (recall that $\E[X]=0$ in this case)
and \eqref{eq:f_p submultiplicative} yields
\begin{align*}
\E[g_\vartheta(|U_t|)] &\leq 2 \E \bigg[\sum_{u \in \I_t} g_\vartheta(e^{- S(u)} |X_u|)\bigg]
\leq 2 C_\vartheta \E \bigg[\sum_{u \in \I_t} f_1(e^{-\vartheta S(u)} |X_u|^\vartheta)\bigg]	\\
&\leq 4 C_\vartheta \E \bigg[\sum_{u \in \I_t} f_1(e^{-\vartheta S(u)}) f_1(|X_u|^\vartheta)\bigg],
\end{align*}
which is bounded by Lemma \ref{lem:XlogX} and $\E[|X|^\gamma]<\infty$.
If $\vartheta = 1$, the situation is easier and the above argument works with
$g_\vartheta(x) \defeq h_1(x)$, $x \geq 0$ as this function is convex with concave derivative.
If $\vartheta<1$, then we define $g_\vartheta':[0,\infty) \to [0,\infty)$ via
\begin{align*}
g_\vartheta'(t) \defeq		\begin{cases}
					\frac{t^{\vartheta-1}}{1-\vartheta},	&	\text{ for } t \leq e^{1/(1-\vartheta)},	\\	
					t^{\vartheta - 1} \log t,			&	\text{ for } t \geq e^{1/(1-\vartheta)}
					\end{cases}
\end{align*}
and $g_\vartheta(x) \defeq \int_0^x g_\vartheta'(t) \, \dt$, $x \geq 0$.
Again by Karamata's theorem, $g_\vartheta(x) \sim \frac1\vartheta x^{\vartheta} \log x$ as $x \to \infty$,
i.e., $g_\vartheta(x)$ is of the same order of growth as $f_1(x^\vartheta)$ as $x \to \infty$.
Similarly, $g_\vartheta(x) = \frac1\vartheta x^{\vartheta}$ and $f_1(x^\vartheta) = x^\vartheta$
for small $x$. Consequently, again we find a constant $C_\vartheta > 0$
such that $g_\vartheta(x) \leq f_1(x^\vartheta)$ for all $x \geq 0$.
On the other hand, as $g_\vartheta'$ is non-increasing, $g_\vartheta$ is subadditive and hence
\begin{align*}
\E[g_\vartheta(|U_t|)] \leq \E \bigg[\sum_{u \in \I_t} g_\vartheta \big(e^{- S(u)} |X_u|\big)\bigg]
\leq 2 C_\vartheta \E \bigg[\sum_{u \in \I_t} f_1\big(e^{- \vartheta S(u)} \big)f_1(|X_u|^\vartheta)\bigg].
\end{align*}
Again by Lemma \ref{lem:XlogX}, this is bounded for sufficiently small $t$.

\noindent
Now let $s,t \geq 0$. By conditioning with respect to $\F_{t \wedge s}$,
the $\sigma$-algebra containing all information up to and including time $t \wedge s$,
and using the Markov property, we infer
\begin{align*}
g(t,s)
&= \E[|U_t-U_s|^\vartheta]
\leq \E\bigg[\sum_{u \in \I_{t \wedge s}} e^{-\vartheta S(u)} g(|t-s|,0)  \bigg]	\\
&= m(t \wedge s,\vartheta) \cdot g(|t-s|,0) \to 0
\end{align*}
as $t$ is kept fixed and $s \to t$ by the first part of the proof.
\end{proof}

\begin{proof}[Proof of Lemma \ref{Lem:lattice to arbitrary convergence}]
Suppose that \eqref{eq:random sum scaled lattice} holds for all fixed $\delta > 0$.
Let $f: \R \to \R$ be differentiable with derivative $f'$ such that both $f$ and $f'$ are continuous and bounded.
Define, for $t \geq 0$,
\begin{equation*}
h(t) \defeq \E\big[f\big(t^{\frac3{2\vartheta}} e^{-F(\vartheta) t} U_t \big)\big].
\end{equation*}
By \eqref{eq:random sum scaled lattice}, we have
\begin{equation*}
h(n\delta) \to \E[f(Z_\delta)]
\quad	\text{as } n \to \infty,
\end{equation*}
for all $\delta > 0$.
If we can show that $h$ is continuous, then the Croft-Kingman lemma \cite[Theorem 2]{Kingman:1963} applies
and gives that $\E[f(Z_\delta)]$ is independent of $\delta$ and that $\lim_{t \to \infty} h(t) = \E[f(Z_1)]$.
Since the bounded continuously differentiable functions with bounded derivative are convergence determining on $\R$,
this implies \eqref{eq:random sum scaled limit}. 

\noindent
We now turn to the proof of the continuity of $h$.
For any $x,y \in \R$, we have
\begin{align*}
|f(x)-f(y)| \leq (\|f'\|_\infty \cdot |x-y|) \wedge (2 \|f\|_\infty)
\leq C |x-y|^{\vartheta\wedge 1}
\end{align*}
for some finite constant $C \geq 0$. Consequently, for any $s,t \geq 0$,
\begin{align*}
|h(s)-h(t)| \leq C \E\big[\big| s^{\frac3{2\vartheta}} e^{-F(\vartheta) s} U_s
-t^{\frac3{2\vartheta}} e^{-F(\vartheta) t} U_t \big|^{\vartheta\wedge 1}\big].
\end{align*}
The latter expression tends to $0$ as $s \to t$ by Lemma \ref{Lem:L^r-continuous}.

To prove the second part of the Lemma note that the process $U_t$ satisfies the following branching
relation
\begin{equation}	\label{eq:s2}
U_{t+s} \eqdist \sum_{u\in {\mathcal I_t}} e^{-S(u)} U_{s,u},
\end{equation} where $(U_{s,u})_u$ are independent copies of $U_s$, independent of the process up to time $t$.
Then \eqref{eq:s2} entails
\begin{equation*}
U_t \eqdist  {\1}_{\{E>t\}} X + {\1}_{\{E \leq t\}}\sum_{k=1}^N
A_k U_{t-E,k}
\end{equation*}
and therefore
\begin{align*}
t^{\frac3{2\vartheta}} e^{-F(\vartheta) t} U_t
&\eqdist  {\1}_{\{E>t\}} t^{\frac3{2\vartheta}} e^{-F(\vartheta) t} X
+  {\1}_{\{E \leq t\}} e^{-F(\vartheta)E} \sum_{k=1}^N
t^{\frac3{2\vartheta}}
e^{-F(\vartheta) (t-E)} \! A_k U_{t-E,k}.
\end{align*}
Now passing with $t$ to $\infty$ we conclude that $Z_1$ satisfies \eqref{eq:smoothing equation}.
\end{proof}

\section{Convergence along lattices}	\label{sec:lattice convergence}

Throughout the whole Section \ref{sec:lattice convergence},
we fix some $\delta>0$ and prove that \eqref{eq:random sum scaled lattice} holds
for a non-degenerate random variable $Z_\delta$.

\subsection{Properties of the skeleton branching random walk.}		\label{subsec:skeleton BRW}

The sequence of point processes $(\cZ_{n\delta})_{n \in \N_0}$
forms a discrete-time (or skeleton) branching random walk,
in which each individual produces offspring
with displacement relative to its position given by the points
of an independent copy of the point process $\cZ_\delta$.
In this section, we shall discuss the properties of this branching random walk that are relevant to us.

As $\delta$ is kept fixed throughout Section \ref{sec:lattice convergence},
we abbreviate $m(\delta,\theta)$, defined in \eqref{eq:m(t,theta)}, by $m(\theta)$.
For $n\in\N_0$ and $u \in \I_{n \delta}$, we define
\begin{equation}	\label{eq:V(u)}
V(u)	\defeq	\vartheta S(u)+n\log m(\vartheta) = \vartheta S(u) + n\delta \Phi(\vartheta).
\end{equation}
By the  definition of $\vartheta$, we have
\begin{equation}	\label{eq:boundary case}
\E\bigg[\sum_{u \in \I_\delta} e^{-V(u)}\bigg] = 1
\quad	\text{ and }	\quad
\E\bigg[\sum_{u \in \I_\delta} V(u)e^{-V(u)}\bigg] = 0,
\end{equation}
i.e., the branching random walk $(\sum_{u \in \I_{n\!\delta}} \delta_{V(u)})_{n \in \N_0}$
is in the \emph{boundary case}.\footnote{This notion was coined by Biggins and Kyprianou in \cite{Biggins+Kyprianou:2005}.}
Indeed, the first equation in \eqref{eq:boundary case} follows from \eqref{eq:m(t,theta)}.
Regarding the second, first notice that, by \eqref{eq:boundary case assumption} and \eqref{eq:Phi(theta)},
\begin{equation*}
\E\bigg[\sum_{|u|=1} \vartheta S(u) e^{- \vartheta S(u)}\bigg] = -\vartheta\E \bigg[\sum_{j \geq 1}A_j^\vartheta  \log A_j\bigg]
= -\E \bigg[\sum_{j \geq 1} A_j^\vartheta \bigg] + 1
= -\Phi(\vartheta)
\end{equation*}
and hence, by the many-to-one lemma (see e.g.\ \cite[Proposition 11]{Biggins+Kyprianou:2005}),
for every $n \in \N_0$,
\begin{align*}
\E\bigg[\sum_{|u|=n} \vartheta S(u) e^{- \vartheta S(u)}\bigg] = -n\Phi(\vartheta)(\Phi(\vartheta)+1)^{n-1}.
\end{align*}
Consequently,
\begin{align*}
\E\bigg[\sum_{u \in \I_\delta} & V(u) e^{-V(u)}\bigg]
= \E\bigg[\sum_{u \in \I_\delta} \vartheta S(u) e^{-\vartheta S(u)}\bigg] e^{-\delta \Phi(\vartheta)}
+ \delta \Phi(\vartheta) \E\bigg[\sum_{u \in \I_\delta} e^{-V(u)}\bigg]	\\
&=  e^{-\delta \Phi(\vartheta)} \sum_{n \geq 0} \E \bigg[\sum_{|u|=n} \!  \vartheta S(u) e^{-\vartheta S(u)}
\1_{\{\sigma(u) \leq \delta < \sigma(u)+E(u)\}} \bigg] + \delta \Phi(\vartheta)	\\
&= -\Phi(\vartheta) e^{-\delta \Phi(\vartheta)} \sum_{n \geq 1} n(\Phi(\vartheta)+1)^{n-1}
\Prob(T_n \leq \delta < T_{n+1}) + \delta \Phi(\vartheta)
= 0
\end{align*}
since $\Prob(T_n \leq \delta < T_{n+1}) = e^{-\delta} \frac{\delta^n}{n!}$ for all $n \in \N_0$.
Moreover, it is non-lattice by (A\ref{enum:non-lattice})
and satisfies
\begin{equation}	\label{eq:sigma^2}
\E\bigg[\sum_{u \in \I_\delta} V(u)^2 e^{-V(u)}\bigg] \in (0,\infty).
\end{equation}
The latter follows from Lemma 3.6 in \cite{Bogus+al:2019}.
As a corollary of Lemma \ref{lem:XlogX}, we get the following:
\begin{proposition}
Suppose that assumptions (A\ref{enum:existence vartetha}) and (A\ref{enum:XlogX condition}) are fulfilled. Then for the branching random walk defined by \eqref{eq:V(u)} it holds
\begin{align}	
\label{eq:moment condition 1}
&\E\bigg[\bigg(\sum_{u \in \I_\delta} e^{-V(u)}\bigg) \log_+^2\bigg(\sum_{u \in \I_\delta} e^{-V(u)}\bigg)\bigg] < \infty
\quad	\text{and}	\\
\label{eq:moment condition 2}
&\E\bigg[\bigg(\sum_{u \in \I_\delta} e^{-V(u)} V(u)_{+} \bigg)\log_+\bigg(\sum_{u \in \I_\delta} e^{-V(u)} V(u)_{+} \bigg)\bigg] < \infty.
\end{align}
\end{proposition}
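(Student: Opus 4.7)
Both inequalities will be obtained from Lemma~\ref{lem:XlogX}: the first by a direct application after a reinterpretation, the second by adapting the proof of that lemma and exploiting the elementary bound $xe^{-x} \leq 1/e < 1$ for $x \geq 0$.

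For \eqref{eq:moment condition 1}, observe that $W := \sum_{u \in \I_\delta} e^{-V(u)}$ equals $\sum_{u \in \I_\delta} e^{-\tilde S(u)}$ for the continuous-time BRW obtained from the original one by replacing each weight $A_j$ by $\tilde A_j := A_j^\vartheta e^{-\delta \Phi(\vartheta)}$ and leaving the tree structure and the lifetimes $E(u)$ unchanged; the associated random walk $\tilde S(u) := -\sum_k \log \tilde A_{u_k}(u|_{k-1})$ satisfies $\tilde S(u) = V(u)$ for all $u$. Applying Lemma~\ref{lem:XlogX} at $t = \delta$ with $p = 2$ to this modified BRW, the two hypotheses reduce by near-submultiplicativity \eqref{eq:f_p submultiplicative} to $\E[f_2(X)] < \infty$ and $\E[\sum_j f_2(A_j^\vartheta)] < \infty$, both of which follow from (A\ref{enum:existence vartetha}) and (A\ref{enum:XlogX condition}) (using $\log_+^2 \leq \log^2$ for the second). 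Since $x \log_+^2 x \leq f_2(x)$, the conclusion $\E[f_2(W)] < \infty$ gives \eqref{eq:moment condition 1}.

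For \eqref{eq:moment condition 2}, the plan is to adapt the proof of Lemma~\ref{lem:XlogX} to the dominating quantity
\begin{equation*}
T^t := \sum_{u \in \I} V(u)_+ e^{-V(u)} \1_{\{\sigma(u) \leq t\}}, \qquad \tilde W \leq T^\delta.
\end{equation*}
In three steps: (i) show that $\tilde H(t) := \E[T^t]$ grows at most exponentially, via many-to-one combined with $V(u)_+ \leq |V(u)|$ and the centered-spine estimate $\E[|V_n|] = O(\sqrt n)$ valid in the boundary case by \eqref{eq:sigma^2}; (ii) form the martingale $T^t_n := \E[T^t \mid \F_{\I^{(n)}}]$ and, using the subadditivity $(a+b)_+ \leq a_+ + b_+$ of the positive part along the tree, bound the descendant contribution by $D^t_n \leq V(u_n)_+ e^{-V(u_n)} H(t-\sigma(u_n)) + e^{-V(u_n)} \tilde H(t-\sigma(u_n))$ with $H$ as in the proof of Lemma~\ref{lem:XlogX}; (iii) apply the Topchi\u\i--Vatutin inequality with $h_1$, Jensen's inequality, and $f_1$-near-submultiplicativity exactly as in the proof of that lemma. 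After these reductions $\sum_n \E[f_1(D_n^t)]$ is controlled by a constant multiple of $\E[\sum_u f_1(V(u)_+ e^{-V(u)}) \1_{\sigma(u) \leq \delta}] + \E[\sum_u f_1(e^{-V(u)}) \1_{\sigma(u) \leq \delta}]$; the second sum is finite by conclusion \eqref{eq:sum xlogx} of Lemma~\ref{lem:XlogX} applied to the same modified BRW with $p=1$.

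The key observation that handles the first sum is the identity $f_1(V(u)_+ e^{-V(u)}) = V(u)_+ e^{-V(u)}$: indeed $xe^{-x} \leq 1/e < 1$ for $x \geq 0$ forces $\log_+(xe^{-x}) = 0$, so $f_1(xe^{-x}) = xe^{-x}$ for $x \geq 0$, and both sides vanish for $x < 0$ since then $x_+ = 0$. Consequently $\E[\sum_u f_1(V(u)_+ e^{-V(u)}) \1_{\sigma(u) \leq \delta}] = \tilde H(\delta) < \infty$ by step~(i). This yields $\E[f_1(T^\delta)] < \infty$, whence $\E[\tilde W \log_+ \tilde W] < \infty$, which is \eqref{eq:moment condition 2}. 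The step I expect to be most delicate in writing out is (iii), namely verifying that each of the pieces arising from subadditivity and submultiplicativity of $f_1$ indeed grows at most exponentially in $t$ so that the argument closes at $t=\delta$.
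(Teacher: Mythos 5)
For \eqref{eq:moment condition 1} you take essentially the same route as the paper: reinterpret $V$ as the position function of a modified continuous-time BRW, apply Lemma~\ref{lem:XlogX} with $p=2$, and absorb the constant shift $e^{-\delta\Phi(\vartheta)}$ via the near-submultiplicativity of $f_2$.

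For \eqref{eq:moment condition 2} your route is genuinely different, and considerably more laborious than the paper's. The paper's argument is a one-line domination followed by a direct re-application of the lemma: writing $V(u)=\sum_i\Delta_i$ for the per-edge increments, the elementary bound $(\sum_i\Delta_i)_+\le\prod_i(1+(\Delta_i)_+)$ gives
$V(u)_+e^{-V(u)}\le\prod_i(1+(\Delta_i)_+)e^{-\Delta_i}=e^{-\tilde S(u)}$ with $\tilde S(u)=\sum_i\bigl(\Delta_i-\log(1+(\Delta_i)_+)\bigr)$. The crucial point is that $\tilde S$ is \emph{again additive along paths}, hence defines a bona-fide BRW, so Lemma~\ref{lem:XlogX} with $p=1$ applies verbatim to the $\tilde S$-BRW and the proof ends there. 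Your proposal instead re-runs the entire martingale plus Topchi\u\i--Vatutin machinery of Lemma~\ref{lem:XlogX} from scratch for the weighted sum $T^t=\sum_u V(u)_+e^{-V(u)}\1_{\{\sigma(u)\le t\}}$. Your observation that $f_1(xe^{-x})=xe^{-x}$ for $x\ge0$ (since $xe^{-x}\le e^{-1}<1$) is correct and neatly disposes of \emph{one} of the resulting sums, but it does not remove the need for the same $X\log X$-type input on the first-generation point process that the paper's hypotheses supply — that term reappears in your step (iii) as $\E[f_1(\sum_j(\Delta_j)_+e^{-\Delta_j})]$ after the factorization. Both arguments therefore use the same assumptions; yours just re-derives the lemma instead of leveraging it.

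Two smaller inaccuracies in the proposal. In step (i), $\E[|V_n|]=O(\sqrt n)$ is not correct: the random walk $V_n$ indexed by \emph{tree} generation is not centered (only the skeleton walk, i.e.\ indexed by time $n\delta$, is), so under the many-to-one change of measure $\E[|V_n|]=O(n)$; the conclusion that $\tilde H(t)$ grows at most exponentially still holds, but the justification should be corrected. In step (ii), your bound $D^t_n\le V(u_n)_+e^{-V(u_n)}H(t-\sigma(u_n))+e^{-V(u_n)}\tilde H(t-\sigma(u_n))$ omits the sum over the children $u_nj$ of $u_n$ that appears when the conditional expectation is evaluated given $\F_{\I^{(n)}}$; the correct form is a sum $\sum_j$ with arguments $\sigma(u_nj)$ and weights $e^{-(V(u_nj)-V(u_n))}$, which is precisely where the first-generation moment assumptions enter the subsequent factorization.
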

\begin{proof}
An application of Lemma \ref{lem:XlogX} with $V=\vartheta S, p=2$
and $s=\delta$ gives \eqref{eq:moment condition 1}. For  \eqref{eq:moment condition 2} note that
\begin{equation*}\vartheta S(u)_+e^{-\vartheta S(u)}\le e^{-\tilde S(u)},\end{equation*}
where $\tilde S(u)\defeq\sum_{i=1}^{|u|}\vartheta(S(u_i)-S(u_{i-1}))-\log(1+\vartheta(S(u_i)-S(u_{i-1}))_+)$.
Hence the application   of Lemma \ref{lem:XlogX} with $V=\tilde S, p=1$ and $s=\delta$ finishes the proof.
\end{proof}

We have now checked that the assumptions of \cite[Theorem 1.1]{Madaule:2017} hold
and infer, with $V_{n}(u) \defeq V(u)-\frac{3}{2}\log(n)$ for $u\in \I_{n\delta}$,
\begin{equation*}
\cZ_n^\circ	\defeq	\sum_{u \in \I_{n\!\delta}} \delta_{V_n(u)}
\distto \cZ_{\infty}^\circ
\end{equation*}
where $\cZ_{\infty}^\circ$ is a point process on $\R$ satisfying $\cZ_{\infty}^\circ((-\infty,0]) < \infty$ a.\,s.
Here, the convergence in distribution is in the space of locally finite point measures
equipped with the topology of vague convergence.
For $k \in \N$, define $P_k \defeq \inf\{t \in \R: \cZ_{\infty}^\circ((-\infty,t]) \geq k\}$,
that is, $- \infty < P_1 \leq P_2 \leq P_3 \leq \ldots$ and $\cZ_{\infty}^\circ = \sum_{k \in \N} \delta_{P_k}$.
Then \cite[Formula (5.4)]{Iksanov+Kolesko+Meiners:2018} gives
\begin{align}	\label{eq:finite series}
\sum_{k\in\N} e^{-\beta P_{k}} < \infty	\quad	\Prob\text{-a.\,s.}
\end{align}
for every $\beta > 1$.
For simplicity of notation, suppose that $\cZ_{\infty}^\circ$ is defined on the probability space $(\Omega,\F,\Prob)$
and that it is independent of the families $(A(u),E(u))_{u \in \I}$ and $(X_u)_{u \in \I}$.
In particular, $(X_k)_{k \in \N}$ is independent of $\cZ_{\infty}^\circ$.
We consider the following random sums
\begin{equation*}
Z_n^* \defeq \sum_{k=1}^n e^{-\frac{P_k}{\vartheta}} X_k,		\quad	n \in \N.
\end{equation*}
Our main result, Theorem \ref{Thm:beyond the boundary}, follows directly from Lemma \ref{Lem:lattice to arbitrary convergence} and the following proposition.

\begin{proposition}	\label{Prop:extremal convergence skeleton}
Suppose that (A\ref{enum:non-lattice}) through (A\ref{enum:XlogX condition}) hold and $\mu_0\in\meas^1_r(\R)$ for some  $ r\in( \vartheta ,2]$.
Then $Z_n^* \Probto Z$ as $n \to \infty$ for some non-degenerate random variable $Z$ and
\begin{equation*}
m(\vartheta)^{-\frac{n}\vartheta} (n\delta)^{\frac3{2\vartheta}}\sum_{u \in \I_{n\!\delta}} e^{-S(u)} X_u \distto Z,
\end{equation*}
i.e., \eqref{eq:random sum scaled lattice} holds.
\end{proposition}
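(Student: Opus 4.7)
The identity
$$m(\vartheta)^{-n/\vartheta}(n\delta)^{3/(2\vartheta)}\sum_{u\in\I_{n\delta}}e^{-S(u)}X_u
=\delta^{3/(2\vartheta)}\sum_{u\in\I_{n\delta}}e^{-V_n(u)/\vartheta}X_u\eqdef\delta^{3/(2\vartheta)}W_n$$
reduces the second claim to convergence of $W_n$ to a fixed multiple of $Z$. My plan is first to construct $Z$ as the a.s.\ limit of $Z_n^*$ conditional on $\cZ_\infty^\circ$, and then to push the vague convergence $\cZ_n^\circ\distto\cZ_\infty^\circ$ through an i.i.d.\ marking via characteristic functions.

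\textbf{Construction of $Z$.} Pick $r'\in(\vartheta,\,r\wedge 2]$ and set $\rho\defeq r'/\vartheta>1$. Conditional on $\cZ_\infty^\circ=\sum_k\delta_{P_k}$, the summands $e^{-P_k/\vartheta}X_k$ are independent and, when $r'>1$, centered. Subadditivity of $|\cdot|^{r'}$ (for $r'\leq 1$) or the von Bahr--Esseen inequality (for $r'\in(1,2]$) gives
$$\E\bigl[|Z_n^*-Z_m^*|^{r'}\mid\cZ_\infty^\circ\bigr]\leq C\,\E[|X|^{r'}]\sum_{k=m+1}^n e^{-\rho P_k},$$
and by \eqref{eq:finite series} the series $\sum_k e^{-\rho P_k}$ is a.s.\ finite. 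Hence $(Z_n^*)$ is conditionally Cauchy in $L^{r'}$ a.s., converging to $Z\defeq\sum_k e^{-P_k/\vartheta}X_k$; integrating out yields $Z_n^*\Probto Z$. Non-degeneracy of $Z$ follows since $\cZ_\infty^\circ$ has a.s.\ infinitely many atoms and the marks are i.i.d.\ nontrivial.

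\textbf{Distributional convergence of $W_n$.} With $\phi_0$ the c.f.\ of $\mu_0$ and using independence of $(X_u)$ from $\cZ_n^\circ$,
$$\E[e^{itW_n}]=\E\Bigl[\prod_{u\in\I_{n\delta}}\phi_0(te^{-V_n(u)/\vartheta})\Bigr].$$
The bound $|1-\phi_0(\xi)|\leq C|\xi|^r$ (standard for $\mu_0\in\meas^1_r(\R)$, cf.\ the proof of Proposition \ref{Prop:Wild representation}) combined with $|1-\prod z_j|\leq\sum|1-z_j|$ on the unit disc yields
$$\Bigl|1-\prod_{V_n(u)>K}\phi_0(te^{-V_n(u)/\vartheta})\Bigr|\leq C|t|^r\sum_{V_n(u)>K}e^{-\beta V_n(u)},\qquad\beta\defeq r/\vartheta>1.$$
For $K$ not charged by $\cZ_\infty^\circ$, the finite product $\prod_{V_n(u)\leq K}\phi_0(te^{-V_n(u)/\vartheta})$ converges in distribution as $n\to\infty$ to $\prod_{P_k\leq K}\phi_0(te^{-P_k/\vartheta})$, by Madaule's vague convergence $\cZ_n^\circ\distto\cZ_\infty^\circ$, Skorokhod's representation, and independence of the marks. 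Letting $n\to\infty$ and then $K\to\infty$, using dominated convergence and the truncation bound, identifies $\lim_n\E[e^{itW_n}]$ with the c.f.\ of the appropriate multiple of $Z$, finishing the proof.

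\textbf{Main obstacle.} The decisive technical point is the uniform-in-$n$ tail estimate
$\lim_{K\to\infty}\limsup_{n\to\infty}\Prob\bigl(\sum_{V_n(u)>K}e^{-\beta V_n(u)}>\varepsilon\bigr)=0$
for $\beta>1$. In the boundary case the super-critical additive martingale has exponentially growing expectation, so a naive many-to-one estimate diverges. I would derive the bound from \cite{Iksanov+Kolesko+Meiners:2018,Madaule:2017}, whose hypotheses have been verified in Section \ref{subsec:skeleton BRW} under (A\ref{enum:existence vartetha})--(A\ref{enum:XlogX condition}); these results promote the vague point-process convergence to convergence of exponentially weighted sums, yielding both the tightness and the truncation bound we need. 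This is precisely the new BRW input that allows us to go beyond the boundary case treated in \cite{Bogus+al:2019}.
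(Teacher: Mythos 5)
Your proposal is correct in its overall logic, and it takes a genuinely different route from the paper on the convergence-in-distribution part, while the construction of $Z$ from the conditional Cauchy argument and \eqref{eq:finite series} is essentially identical to the paper's. The paper works in the ``probability domain'': it introduces the two-dimensional marked point processes $\cZ_n^* = \sum_{|u|=n}\delta_{(V_n(u),X_u)}$ and $\cZ_\infty^* = \sum_k\delta_{(P_k,X_k)}$, proves convergence of integrals against compactly supported test functions (Lemma~\ref{Lem:convergence of Z^*_n}, imported from \cite{Iksanov+Kolesko+Meiners:2018}), splits the sum as $Z_{n,K}+R_{n,K}$ with a continuous cutoff $f_K$, controls the remainder with Lemma~\ref{Lem:remainder}, and glues the three pieces with Billingsley's Theorem~4.2. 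You instead integrate out the marks immediately, so the quantity to control is the deterministic functional $\E\bigl[\prod_u \phi_0(te^{-V_n(u)/\vartheta})\bigr]$ of the \emph{unmarked} point process $\cZ_n^\circ$, and you split the product at level $K$; the truncated part is handled by Skorokhod representation and vague convergence alone, and the tail by $|1-\phi_0(\xi)|\le C|\xi|^r$ together with the elementary product inequality. What the paper's route buys is modularity: Lemmas 5.2 and 5.3 of \cite{Iksanov+Kolesko+Meiners:2018} are reusable facts about marked point processes, and Billingsley's theorem mechanically produces the limit without ever touching characteristic functions. What your route buys is economy: there is no need for the two-dimensional point-process machinery, the cancellation coming from centering of $\mu_0$ is absorbed cleanly into $|1-\phi_0(\xi)|\le C|\xi|^r$, and the tail estimate you need concerns only the unmarked weights $\sum_{V_n(u)>K}e^{-\beta V_n(u)}$, which is a slightly cleaner object than the mark-weighted sum in Lemma~\ref{Lem:remainder}.

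The one place I would press you is the tail bound $\lim_{K\to\infty}\limsup_{n\to\infty}\Prob\bigl(\sum_{V_n(u)>K}e^{-\beta V_n(u)}>\varepsilon\bigr)=0$ for $\beta=r/\vartheta>1$. You correctly identify it as the decisive step and correctly observe that a naive many-to-one bound diverges (indeed $\E\bigl[\sum_u e^{-\beta V_n(u)}\bigr]=n^{3\beta/2}m(\beta\vartheta)^n/m(\vartheta)^{\beta n}\to\infty$ since $\Phi(\beta\vartheta)\ge\beta\Phi(\vartheta)$ by convexity and the tangent condition). You would be on firmer ground to pin down the exact statement you invoke: what you need is the convergence in distribution of $\sum_u e^{-\beta V_n(u)}$ (equivalently, of $n^{3\beta/2}\sum_u e^{-\beta V(u)}$) to the finite limit $\sum_k e^{-\beta P_k}$, which for $\beta>1$ in the boundary case is Madaule's theorem on the ``overlap'' or the Gibbs measure at inverse temperature $\beta$; the vague convergence $\cZ_n^\circ\distto\cZ_\infty^\circ$ by itself does not give it. Since the paper cites exactly the same references (\cite{Madaule:2017}, \cite{Iksanov+Kolesko+Meiners:2018}) and verifies their hypotheses \eqref{eq:moment condition 1}--\eqref{eq:moment condition 2} in Section~\ref{subsec:skeleton BRW}, this is available, but your argument should make explicit that it is the full weighted-sum convergence, and not merely the point-process convergence, that is being used. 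Note also that the paper had to tweak the proof of \cite[Lemma 5.3]{Iksanov+Kolesko+Meiners:2018} (replacing the Topchi\u\i--Vatutin inequality by \eqref{eq:moment subadditivity}) because the marks there are the $X_u$; in your version the tail bound involves no marks, so that adaptation is avoided, which is a genuine simplification.

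Two small points: you should make the auxiliary exponent explicit ($r'=r$ works; the intersection with $2$ is redundant since $r\le 2$ by hypothesis), and the non-degeneracy claim needs $\mu_0\ne\delta_0$, an implicit assumption shared with the paper.
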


The bulk of the proof of this proposition can be adopted from the proof of Theorem 2.5 in
\cite{Iksanov+Kolesko+Meiners:2018},
however at some points changes are needed. In what follows, we repeat the major steps of the proof of
the cited theorem adjusted to the situation here and point out the changes that are required.

Define the following point processes on $\R^2$:
\begin{equation*}
\cZ_{\infty}^* \defeq \sum_{k \in \N} \delta_{(P_k,X_k)}
\quad	\text{and}	\quad
\cZ_{n}^* \defeq \sum_{|u|=n}\delta_{(V_n(u),X_u)},	\quad	n \in \N_0.
\end{equation*}

\begin{lemma}	\label{Lem:convergence of Z^*_n}
Suppose that the assumptions of Proposition \ref{Prop:extremal convergence skeleton} are satisfied.
Then $\int f(x,y) \, \cZ_{n}^*(\dx,\dy) \to \int f(x) \, \cZ_{\infty}^*(\dx,\dy)$ for all bounded continuous functions
$f: \R^2 \to \R$ satisfying $f(x,y) = 0$ for all sufficiently large $x$.
\end{lemma}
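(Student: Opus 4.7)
The plan is to reduce the claim to the already-established vague convergence $\cZ_n^\circ \distto \cZ_\infty^\circ$ of the \emph{unmarked} extremal process (Madaule's theorem), exploiting two structural facts: the marks $X_u$ are i.i.d.\ copies of $X$, and they are \emph{independent} of the family of positions $\{V_n(u)\}$. The natural machinery is convergence of Laplace functionals.

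\textbf{Laplace functional reduction.} By decomposing $f = f_+ - f_-$, I may assume $f \geq 0$. Fix $M$ with $f(x,y) = 0$ whenever $x \geq M$, and define
\[
g(x) \defeq -\log \E\big[e^{-f(x, X)}\big], \qquad x \in \R.
\]
Dominated convergence shows $g$ is continuous with $0 \leq g \leq \|f\|_\infty$ and $g(x) = 0$ for $x \geq M$. Conditioning on the $\sigma$-algebra generated by the positions $\{V_n(u)\}_{|u|=n}$ and using the independence of the marks gives
\[
\E\bigg[\exp\Big(-\!\!\sum_{|u|=n} f(V_n(u), X_u)\Big)\bigg] = \E\bigg[\exp\Big(-\!\!\sum_{|u|=n} g(V_n(u))\Big)\bigg],
\]
and analogously the right-hand side of the claim equals $\E[\exp(-\sum_k g(P_k))]$. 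Hence it suffices to prove that $\sum_{|u|=n} g(V_n(u)) \distto \sum_k g(P_k)$ as $n \to \infty$; convergence of Laplace functionals for indicators and then for bounded continuous $f$ will close the argument.

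\textbf{Transferring vague convergence by truncation.} If $g$ had compact support, this would be immediate from $\cZ_n^\circ \distto \cZ_\infty^\circ$. Since $g$ is supported only on $(-\infty, M]$, I would truncate on the left: choose a continuous cutoff $\chi_L: \R \to [0,1]$ with $\chi_L = 1$ on $[-L,\infty)$ and $\chi_L = 0$ on $(-\infty,-L-1]$, set $g_L \defeq g\chi_L$, and apply the vague convergence to the compactly supported test function $g_L$ to obtain $\sum_{|u|=n} g_L(V_n(u)) \distto \sum_k g_L(P_k)$ for each fixed $L$. The truncation error is bounded by $\|g\|_\infty \cdot \cZ_n^\circ((-\infty,-L])$ at finite $n$ and by $\|g\|_\infty \cdot \cZ_\infty^\circ((-\infty,-L])$ in the limit, so a standard $3\varepsilon$-argument concludes the reduction provided both tails can be made uniformly small as $L \to \infty$.

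\textbf{Main obstacle.} The real content is the tightness estimate
\[
\lim_{L \to \infty} \limsup_{n \to \infty} \Prob\big(\cZ_n^\circ((-\infty, -L]) \geq 1\big) = 0,
\]
i.e., tightness of the minimal position $\min_{|u|=n} V_n(u)$ of the boundary-case branching random walk; the companion bound $\Prob(P_1 \leq -L) \to 0$ follows from $P_1 > -\infty$ a.s.\ together with \eqref{eq:finite series}. Under assumptions (A\ref{enum:existence vartetha})--(A\ref{enum:XlogX condition}), the tightness of the minimum is a well-known byproduct of Madaule's framework (and can alternatively be extracted from $\cZ_n^\circ \distto \cZ_\infty^\circ$ via a Portmanteau argument applied to the open set $\{\nu : \nu((-\infty,-L]) = 0\}$). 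Once this tightness is in hand, the proof is complete.
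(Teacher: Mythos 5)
The paper does not actually prove this lemma: it disposes of it with a one-line citation to \cite[Lemma 5.2]{Iksanov+Kolesko+Meiners:2018}. Your proposal, by contrast, reconstructs a genuine argument, and the overall strategy --- reduce marked to unmarked via conditional Laplace functionals, truncate on the left, control the truncation error by the count $\cZ_n^\circ((-\infty,-L])$ --- is sound and is presumably close in spirit to the argument in the cited reference. Two points deserve attention.

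First, the reduction to $f\ge 0$ via $f=f_+-f_-$ is slightly glossed: you need \emph{joint} convergence in distribution of $\bigl(\int f_+\,\dd\cZ_n^*,\int f_-\,\dd\cZ_n^*\bigr)$, which you get by running the Laplace-functional argument for $af_++bf_-$ with $a,b\ge 0$ and invoking the Laplace-transform version of Cram\'er--Wold for nonnegative vectors. Minor, but worth saying.

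Second, and more substantively, the parenthetical claim that the required tightness ``can alternatively be extracted from $\cZ_n^\circ\distto\cZ_\infty^\circ$ via a Portmanteau argument applied to the open set $\{\nu:\nu((-\infty,-L])=0\}$'' is wrong: this set is \emph{not} open in the vague topology on locally finite point measures, because $(-\infty,-L]$ is not compact. (Take $\nu_n=\delta_{-n}\to 0$ vaguely; $0$ lies in the set while $\nu_n$ does not for $n\ge L$.) Vague convergence alone says nothing about mass escaping to $-\infty$, which is precisely the content of the tightness estimate
\[
\lim_{L\to\infty}\limsup_{n\to\infty}\Prob\bigl(\cZ_n^\circ((-\infty,-L])\ge 1\bigr)=0,
\]
i.e.\ tightness of $\min_{u\in\I_{n\delta}}V_n(u)$. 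This is genuinely needed as a separate input. Fortunately it does hold under (A\ref{enum:non-lattice})--(A\ref{enum:XlogX condition}) --- these are exactly A\"{\i}d\'ekon's hypotheses for convergence in law of the recentred minimum, on which Madaule's Theorem 1.1 also rests --- so your main conclusion stands once the Portmanteau remark is dropped and the tightness is instead attributed to A\"{\i}d\'ekon/Madaule.
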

\begin{proof}[Source]
The lemma is a special case of Lemma 5.2 in \cite{Iksanov+Kolesko+Meiners:2018}.
\end{proof}

\begin{lemma}	\label{Lem:remainder}
Under the assumptions of Proposition \ref{Prop:extremal convergence skeleton},
for any $\delta>0$ and any measurable $h:\R\mapsto\R$ satisfying $0 \leq h_K \leq \1_{[K,\infty)}$,
we have
\begin{equation*}
\lim_{K\to\infty}\limsup_{n\to\infty}
\Prob\bigg(\bigg| \sum_{|u|=n}e^{-\frac1{\vartheta}V_n(u)}h_K(V_n(u))X_u  \bigg| > \delta \bigg) = 0.
\end{equation*}
\end{lemma}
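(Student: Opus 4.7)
The plan is to condition on the branching random walk $\F \defeq \sigma(V(u) : u \in \I)$, reduce via a conditional $L^r$-moment estimate to a vanishing-tail statement about a weighted sum over the BRW, and then invoke Madaule's extremal convergence together with a $\beta$-weighted additive convergence valid in the boundary case. Set $T_n^K \defeq \sum_{|u|=n} e^{-V_n(u)/\vartheta} h_K(V_n(u)) X_u$ and let $\beta \defeq r/\vartheta > 1$. Conditionally on $\F$ the $X_u$ are i.i.d.\ copies of $X \sim \mu_0$, centered whenever $r > 1$. Applying \eqref{eq:moment subadditivity}---the von Bahr--Esseen inequality for $r \in (1,2]$ and subadditivity of $x \mapsto x^r$ for $r \in (\vartheta, 1]$---and using $h_K^r \leq h_K \leq \1_{[K, \infty)}$,
\[
\E\big[|T_n^K|^r \bigm| \F\big] \leq 2 \E[|X|^r] \, R_n^K, \qquad R_n^K \defeq \sum_{|u|=n} e^{-\beta V_n(u)} \1_{\{V_n(u) \geq K\}}.
\]
The conditional Markov inequality yields $\Prob(|T_n^K| > \delta \mid \F) \leq 1 \wedge (2 \delta^{-r} \E[|X|^r] R_n^K)$, so taking expectations and using bounded convergence it suffices to prove
\begin{equation}\label{eq:plan-goal}
\lim_{K \to \infty} \limsup_{n \to \infty} \Prob(R_n^K > \eta) = 0 \qquad \text{for every } \eta > 0.
\end{equation}

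To prove \eqref{eq:plan-goal} I decompose $R_n^K = R_n^{[K,L]} + R_n^{>L}$ according to whether $V_n(u) \in [K,L]$ or $V_n(u) > L$. The window piece $R_n^{[K,L]}$ is the integral of (a continuous approximation of) the bounded, compactly-supported function $e^{-\beta x} \1_{[K,L]}(x)$ against the point measure $\cZ_n^\circ$, so Madaule's convergence $\cZ_n^\circ \distto \cZ_\infty^\circ$ gives $R_n^{[K,L]} \distto \sum_{k: K \leq P_k \leq L} e^{-\beta P_k}$ as $n \to \infty$. For the upper tail $R_n^{>L}$ a naive moment bound fails: a many-to-one computation using $\phi(\beta-1)>1$ shows that $\E[R_n^{>L}]$ grows like $n^{3/2}$ in the boundary case, so no $L^1$- or $L^r$-estimate is of any direct use. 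Instead, I invoke the stronger convergence
\[
\sum_{|u|=n} e^{-\beta V_n(u)} \distto \sum_k e^{-\beta P_k} \qquad (n \to \infty),
\]
which holds for $\beta > 1$ in the boundary BRW under the moment hypotheses \eqref{eq:moment condition 1}--\eqref{eq:moment condition 2} established in Section \ref{subsec:skeleton BRW} (cf.\ \cite{Madaule:2017, Iksanov+Kolesko+Meiners:2018}). Subtracting the window piece then yields $R_n^{>L} \distto \sum_{k: P_k > L} e^{-\beta P_k}$, and combining the two gives $R_n^K \distto R_\infty^K \defeq \sum_{k: P_k \geq K} e^{-\beta P_k}$.

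By \eqref{eq:finite series}, $\sum_k e^{-\beta P_k} < \infty$ a.s., hence $R_\infty^K \to 0$ a.s.\ as $K \to \infty$. The portmanteau theorem, applied at continuity $\eta$ of the law of $R_\infty^K$, gives $\limsup_n \Prob(R_n^K > \eta) \leq \Prob(R_\infty^K \geq \eta) \to 0$ as $K \to \infty$, which proves \eqref{eq:plan-goal} (the density of continuity points handles general $\eta$). The principal obstacle in this plan is the $\beta$-weighted additive convergence used to tame the upper tail uniformly in $n$: it upgrades the vague convergence of $\cZ_n^\circ$ to integration against the unbounded test function $e^{-\beta \cdot}$ by controlling the contribution of particles with atypically large displacements, and it is precisely here that the refined extremal-process estimates of \cite{Madaule:2017, Iksanov+Kolesko+Meiners:2018} are needed.
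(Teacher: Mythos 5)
Your opening steps --- conditioning on $\F = \sigma(V(u):u\in\I)$, applying \eqref{eq:moment subadditivity} with exponent $r$ to the conditionally centered sum, and bounding $h_K^r \leq \1_{[K,\infty)}$ --- are exactly the modification the paper describes: its own proof is essentially a pointer to the proof of Lemma 5.3 in \cite{Iksanov+Kolesko+Meiners:2018}, noting only that the Topchi\u\i--Vatutin inequality used there must be replaced by \eqref{eq:moment subadditivity}. So up to the reduction to
\begin{equation*}
\lim_{K\to\infty}\limsup_{n\to\infty}\Prob(R_n^K>\eta)=0, \qquad R_n^K=\sum_{|u|=n}e^{-\beta V_n(u)}\1_{\{V_n(u)\ge K\}},\quad \beta=\tfrac{r}{\vartheta}>1,
\end{equation*}
you are doing precisely what the authors intend.

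What you then supply is a self-contained argument for this tail estimate that the paper delegates to \cite{Iksanov+Kolesko+Meiners:2018}. Your observation that a first-moment (many-to-one) bound is useless here is correct and important: $\E[R_n^{>L}]$ grows polynomially in $n$ for any fixed $L$, so some distributional-convergence input is unavoidable. The ingredient you invoke, the convergence in law of the full $\beta$-weighted partition function $\sum_{|u|=n}e^{-\beta V_n(u)}$ to $\sum_k e^{-\beta P_k}$ for $\beta>1$, is indeed the natural tool, but it is a genuinely deep result (Madaule's convergence of $n^{3\beta/2}W_{n,\beta}$) and it is \emph{not} a formal consequence of the vague convergence $\cZ_n^\circ\distto\cZ_\infty^\circ$; you should cite it precisely and verify that it is available under exactly the moment conditions \eqref{eq:moment condition 1}--\eqref{eq:moment condition 2} that the paper works so hard to weaken. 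This is the single place where your proposal leans on something you have not checked against the stated hypotheses.

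Two smaller remarks. First, your window/tail decomposition and the ``subtraction'' of the window from the total sum implicitly needs joint convergence of the compactly supported functionals with the full partition function; this is true but worth saying. A cleaner route that avoids the decomposition altogether: fix any $\beta'\in(1,\beta)$ and bound $e^{-\beta V_n(u)}\1_{\{V_n(u)\ge K\}}\le e^{-(\beta-\beta')K}e^{-\beta' V_n(u)}$, so that $R_n^K \le e^{-(\beta-\beta')K}\sum_{|u|=n}e^{-\beta' V_n(u)}$; then a single application of Madaule's convergence at exponent $\beta'$ and the portmanteau theorem gives \eqref{eq:plan-goal} directly, with no window piece to match up. Second, your concluding portmanteau step is fine provided you apply it at a continuity point $\eta'<\eta$ of $\Law(R_\infty^K)$ and let $\eta'\uparrow\eta$, as you indicate parenthetically.
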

\begin{proof}
The lemma follows from (the proof of) Lemma 5.3 in \cite{Iksanov+Kolesko+Meiners:2018},
except at one point in the proof where the Topchi\u\i--Vatutin inequality is used (Lemma A.1 in the cited reference).
The use of the latter inequality has to be replaced by an application of \eqref{eq:moment subadditivity}
The rest of the proof carries over without changes.
\end{proof}

We are now ready to prove Proposition \ref{Prop:extremal convergence skeleton}.

\begin{proof}[Proof of Proposition \ref{Prop:extremal convergence skeleton}]
Recall that $0 < \vartheta < r \leq 2$.
Let $\beta_0 \defeq \frac r\vartheta > 1$.
Given $\cZ_\infty^\circ$, for each $n\in\N$,
the random variable $Z_n^*$ is a sum of independent random variables
(centered in the case $\vartheta \geq 1$).
Then, for any $\delta > 0$ and any $n,m \in \N$ with $m \leq n$, by \eqref{eq:moment subadditivity},
\begin{align*}
\Prob(| Z_n^*-Z_m^*|>\delta | \cZ_\infty^\circ )
\leq \delta^{-r}\E[|X|^r] \cdot
\sum_{k=m+1}^n  e^{-\beta_0 P_k}
\end{align*}
and the second term converges to zero as $m,n \to \infty$ by \eqref{eq:finite series}.
Hence, conditionally given $\cZ_\infty^\circ$,
$(Z_n^*)_{n \in \N_0}$ forms a Cauchy sequence in probability and thus converges in probability.
We denote the limit in probability of the sequence $(Z_n^*)_{n \in \N_0}$
by $Z$.

The proof of the second part is based on the decomposition
\begin{align*}
(m(\vartheta))^{-\frac n \vartheta} n^{\frac3{2\vartheta}}\sum_{u \in \I_{n\!\delta}}e^{-S(u)} X_u
&= \sum_{u \in \I_{n\!\delta}}e^{-\frac1\vartheta V_n(u)} X_u	\\
&= \sum_{u \in \I_{n\!\delta}}e^{-\frac1\vartheta V_n(u)} f_K(V_n(u)) X_u	\\
&\hphantom{=} +\sum_{u \in \I_{n\!\delta}}e^{-\frac1\vartheta V_n(u)} (1-f_K(V_n(u))) X_u	\\
&~\eqdef Z_{n,K}+R_{n,K},
\end{align*}
where $f_K$ is a continuous function such that $\1_{(-\infty,K]} \leq f_K \leq \1_{(-\infty,K+1]}$.
The remainder of the proof is based on an application of Theorem 4.2 in \cite{Billingsley:1968}.
In view of Lemma \ref{Lem:remainder}, the cited theorem gives the assertion
once we have shown the following two assertions:

\begin{enumerate}[1.]
	\item	$Z_{n,K} \distto Z_K^*$ as $n \to \infty$ for every fixed $K>0$ where $Z_K^*$ is some finite random variable;
	\item	$Z_K^* \Probto Z$ as $K \to \infty$.
\end{enumerate}
The first assertion is a consequence of Lemma \ref{Lem:convergence of Z^*_n}.
Indeed, the function on $\R^2$ that maps $(x,y)$ to $e^{-\frac{1}{\vartheta} x} f_K(x) y$
is continuous and vanishes for all sufficiently large $x$.
Therefore, Lemma \ref{Lem:convergence of Z^*_n} yields
\begin{align*}
Z_{n,K} &= \sum_{u \in \I_{n\!\delta}} e^{-\frac{1}{\vartheta} V_n(u)} f_K(V_n(u)) X_u	\\
&=		\int e^{-\frac{1}{\vartheta} x} f_K(x) y \, \cZ_{n}^*(\dx,\dy)
\distto	\int e^{-\frac{1}{\vartheta} x} f_K(x) y \, \cZ_\infty^*(\dx,\dy)
\eqdef	Z_K^*.
\end{align*}
The second assertion can be proved similarly as in the proof of Theorem 2.5 in \cite{Iksanov+Kolesko+Meiners:2018}.
More precisely, it follows from the dominated convergence theorem once we have proved that
\begin{equation*}
\Prob(|Z-Z_K^*| > \varepsilon \mid \cZ_\infty^\circ) \to	 0	\quad	\text{a.\,s.}
\end{equation*}
as $K \to \infty$ for every $\varepsilon > 0$. Now fix $\varepsilon > 0$ and observe that
\begin{align*}
\Prob(|Z-Z_K^*| > \varepsilon \mid \cZ_\infty^\circ)
&\leq \Prob(\{|Z_n^*-Z_K^*| > \varepsilon \text{ for infinitely many } n\} \mid \cZ_\infty^\circ)	\\
&= \E\big[ \liminf_{n \to \infty} \1_{\{|Z_n^*-Z_K^*| > \varepsilon\}} \mid \cZ_\infty^\circ \big]	\\
&\leq \liminf_{n \to \infty} \Prob(|Z_n^*-Z_K^*| > \varepsilon \mid \cZ_\infty^\circ)	\\
&\leq \liminf_{n \to \infty} \varepsilon^{-r} \E[|Z_n^*-Z_K^*|^r \mid \cZ_\infty^\circ],
\end{align*}
where Fatou's lemma gives the first inequality and Markov's inequality the second.
Now given a realization $P_1 \leq P_2 \leq \ldots $ of the point process $\cZ_\infty^\circ$,
we can choose $n \in \N$ such that $P_n > K+1$.
Then
\begin{align*}
\E[|Z_n^*-Z_K^*|^r \mid \cZ_\infty^\circ]
&= \E\bigg[\bigg|\sum_{k=1}^n e^{-\frac{P_k}{\vartheta}} (1-f_K(P_k)) X_k\bigg|^r \, \bigg| \, \cZ_\infty^\circ  \bigg]	\\
&\leq 2 \sum_{k=1}^n e^{-\frac{r}{\vartheta} P_k} (1-f_K(P_k))^r \, \E[|X_k|^r],
\end{align*}
where we have used inequality \eqref{eq:moment subadditivity}.
The latter can be estimated as follows
\begin{align*}
\sum_{k=1}^n e^{-\frac{p}{\vartheta} P_k} (1-f_K(P_k))^r \, \E[|X_k|^r]
\leq \E[|X_1|^r] \sum_{k \geq 1: P_k > K} e^{-\frac{r}{\vartheta} P_k}
\to 0	\quad	\text{a.\,s.}
\end{align*}
as $K \to \infty$ by \eqref{eq:finite series} since $\frac r\vartheta > 1$.
\end{proof}

\subsubsection*{Acknowledgements.}
D.\,B.~and K.\,K.~were partially supported by the National Science Center, Poland (Sonata Bis, grant number DEC-2014/14/E/ST1/00588).
The research of M.\,M.\ was supported by DFG Grant ME 3625/3-1.
The work was initiated while D.\,B.\ was visiting Innsbruck in May 2019.
He gratefully acknowledges hospitality and the financial support again by DFG Grant ME 3625/3-1.

%
%
%
%

\bibliographystyle{plain}
\bibliography{self}

\end{document}